\title{Stable Parametrization of Continuous and Piecewise-Linear Functions\thanks{This work was supported by the Swiss National Science Foundation, Grant 200020\_184646 / 1.}}
\author[1]{Alexis Goujon \thanks{alexis.goujon@epfl.ch}}
\author[1]{Joaquim Campos}
\author[1]{Michael Unser}
\affil[1]{\'Ecole polytechnique f\'ed\'erale de Lausanne}
\begin{document}
\maketitle
\begin{abstract}
  Rectified-linear-unit (ReLU) neural networks, which play a prominent role in deep learning, generate continuous and piecewise-linear (CPWL) functions. While they provide a powerful parametric representation, the mapping between the parameter and function spaces lacks stability. In this paper, we investigate an alternative representation of CPWL functions that relies on local hat basis functions. It is predicated on the fact that any CPWL function can be specified by a triangulation and its values at the grid points. We give the necessary and sufficient condition on the triangulation (in any number of dimensions) for the hat functions to form a Riesz basis, which ensures that the link between the parameters and the corresponding CPWL function is stable and unique. In addition, we provide an estimate of the $\ell_2\rightarrow L_2$ condition number of this local representation. Finally, as a special case of our framework, we focus on a systematic parametrization of $\R^d$ with control points placed on a uniform grid. In particular, we choose hat basis functions that are shifted replicas of a single linear box spline. In this setting, we prove that our general estimate of the condition number is optimal. We also relate our local representation to a nonlocal one based on shifts of a causal ReLU-like function.
\end{abstract}


\section{Introduction}
\subsection{Continuous and Piecewise-Linear Functions for Supervised Learning}
The purpose of supervised learning is to reconstruct an unknown mapping from a set of samples \cite{bishop2006pattern}. Namely, given a collection of training data pairs $(\M v_k, y_k) \in \mathbb{R}^{d}\times \mathbb{R}$ for $k=1,\ldots,K$, one wants to find $f \colon \mathbb{R}^{d} \rightarrow \mathbb{R}$ such that $f(\M v_k) \approx y_k$ for $k=1,\ldots,K$, without overfitting. As such, the problem is ill-posed. To make it numerically tractable, a reconstruction space $\mathcal{H}$ is chosen as the image of a finite-dimensional parameter space $\Theta$ under a given synthesis operator $T\colon \Theta \rightarrow\mathcal{H}$. This operator maps a parameter $\V \theta \in \Theta$ to its continuous representation $T\{\V \theta\}\in \mathcal{H}$. A celebrated way to choose the synthesis operator is to pick a feedforward neural network architecture. Given the multidimensional parameter $\V \theta = (\V \theta_1,\ldots,\V \theta_{L+1})\in \Theta$, we then have that
\begin{equation}
T\{\V \theta\} = (\V f_{\V \theta_{L+1} }\circ\V\sigma_L \circ \V f_{\V \theta_L} \circ  \V\sigma_{L-1} \circ \cdots \circ \V\sigma_2 \circ \V f_{\V \theta_2} \circ \V\sigma_1 \circ \V f_{\V \theta_1} ),
\end{equation}
where $L$ is the number of hidden layers of the neural network, $\V f_{\V \theta_k}\colon\R^{d_k}\rightarrow\R^{d_{k+1}}$ is an affine function parametrized by $\V \theta_k$, and $\V \sigma_k$ is an activation function that is chosen \textit{a priori}. For the model to be expressive, the activation functions need to be nonaffine---otherwise the generated function would remain trivially affine. Interestingly, the pointwise rectified-linear-unit (ReLU) function $x\mapsto\max(x,0)$, one of the simplest nonlinear functions, provides state-of-the-art performance \cite{Lecun2015,glorotDeepSparseRectifier2011}. In this case, $T\{\V \theta\}$ is the composition of continuous piecewise-linear (CPWL) functions, which turns out to be a CPWL function as well \cite{montufarNumberLinearRegions2014}. Remarkably, the reverse also holds true: any CPWL function $\mathbb{R}^d\rightarrow \mathbb{R}$ can be parametrized by a deep neural network with at most $\lceil \log_2(d+1) \rceil$ hidden layers \cite{Arora2018}. The depth of the architecture is instrumental to improve the approximation power of the network \cite{montufarNumberLinearRegions2014,eldanPowerDepthFeedforward2016,mhaskarDeepVsShallow2016} and its generalization ability \cite{poggioWhyWhenCan2017}, but it is a serious obstacle to the control of the model. For instance, to control the Lipschitz constant of a feedforward neural network, state-of-the-art techniques rely on theoretical upper bounds that worsen each time a new layer is added 
 \cite{Gouk2021,Scaman2018}. The depth is also detrimental to the interpretability of the parametrization: in deep networks, the effect of a parameter on the constructed mapping is poorly understood.

\subsection{Linear Expansion of Continuous and Piecewise-Linear Functions}
More interpretable representations of CPWL functions are provided by linear expansions. They boil down to two families: local and nonlocal representations (Figure \ref{fig:CPWL_Representation}).
\begin{figure}[h!]
  \begin{minipage}{1.0\linewidth}
    \centering
    \centerline{\includegraphics[width=120mm]{./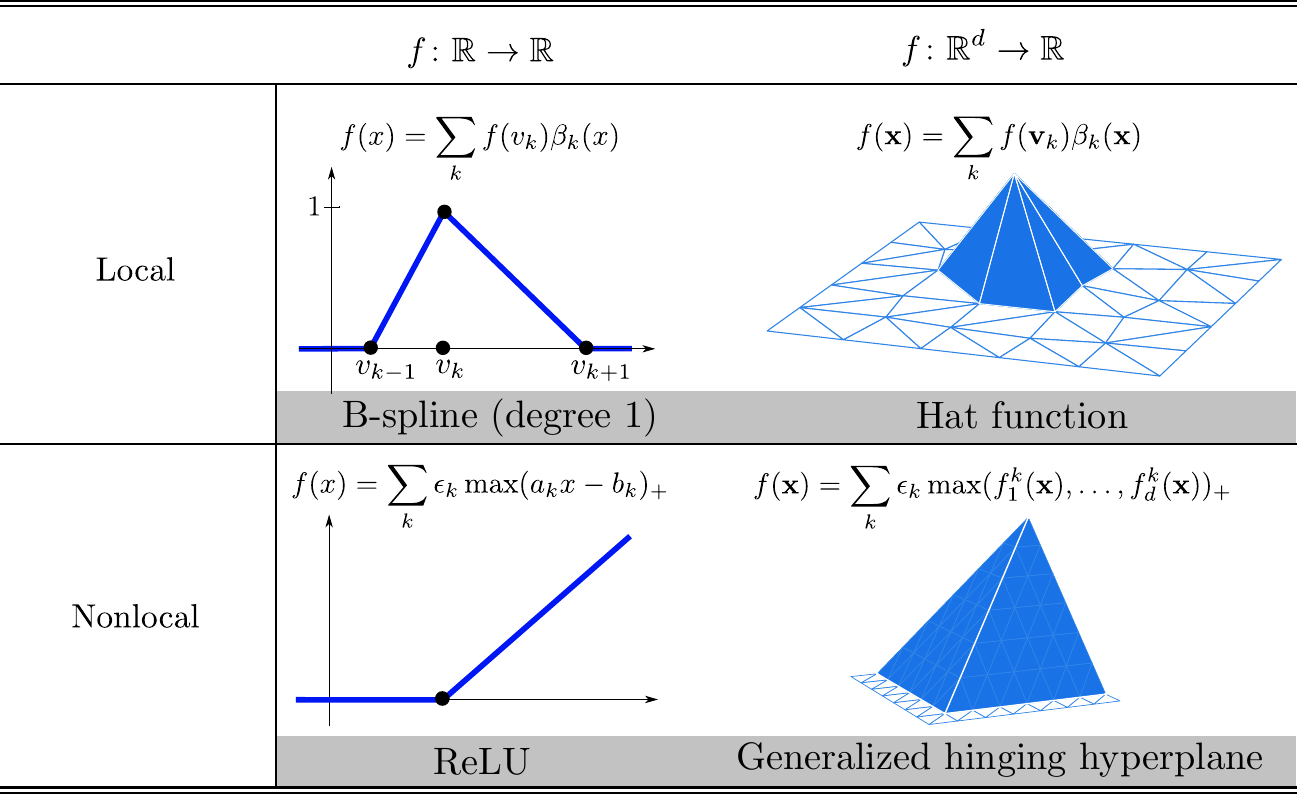}}
    \caption{Local and nonlocal building bricks of CPWL functions, from dimension 1 to any dimension. Notice that the nonlocal basis functions have several equivalent variations; only the ReLU-like one is shown in this figure.}
    \label{fig:CPWL_Representation}\medskip
  \end{minipage}
  \end{figure}
\subsubsection{Local Representation}
In dimension $d=1$, any CPWL function $f$ with knots $v_k$ can be represented by the linear expansion $f=\sum_{k} f(v_k) \beta_k$, where the $\beta_k$ are the underlying triangular B-spline basis functions \cite{DeBoor1976}. In higher dimensions, the knots are replaced by what is called a simplicial triangulation of the set $\{\M v_k\}$ of vertices, which partitions the input domain into simplices. With the appropriate triangulation, a CPWL function can be represented by the explicit linear simplicial spline expansion $f=\sum_{k} f(\M v_k) \beta_k$, where the $\beta_k$ now denote the nodal basis functions or hat functions that correspond to the triangulation \cite{He2020a} (see Section \ref{sc:simplicialCPWL} and  Figure \ref{fig:CPWL_Representation}). These functions satisfy $\beta_k(\M v_q) = \delta_{kq}$ while being affine on the simplices and compactly supported, hence the attribute {\it local}. When the vertices are regularly spaced, so that they coincide with the sites of a lattice, the hat functions can be chosen as translates of a single linear box splines \cite{de1993box,Kim2010}.
\subsubsection{Nonlocal Representations}
In dimension $d=1$, a CPWL function $f$ with control points $v_k$ can also be represented by the nonlocal representation $f(x) = a_0 + a_1(x-v_1) + \sum_{k=2}^{K-1} a_k \mathrm{ReLU}(x-v_k)$ \cite{aliprantis2006continuous}. Note that this representation has many equivalent variations such as $f(x)=c_0+c_1(x-v_1) + \sum_{k=2}^{K-1} c_k \abs{x-v_k}$. The generalization to any dimension is due to Wang and Sun \cite{Wang2005}. Their generalized hinging-hyperplanes (GHH) model can represent any CPWL function as $f(\M x) = \sum_k \epsilon_k \max (f_1^k(\M x),\ldots,f_{m_k}^k(\M x))$, where $ f_1^k,\ldots,f_{m_k}^k$ are affine functions, $\epsilon_k=\pm 1$, and $m_k\leq d+1$. This expansion has also different variations and can be recast as $f(\M x) = \sum_k \epsilon_k \max (g_1^k(\M x),\ldots,g_{m_k-1}^k(\M x))_+$, where $g_1^k,\ldots,g_{m_k-1}^k$ are affine functions, $\epsilon_k=\pm 1$, $m_k\leq d+1$ and for any $x\inR$, $(x)_+\coloneqq\max(x,0)=\mathrm{ReLU}(x)$. The basis functions of nonlocal representations are the building blocks of many feedforward neural networks. They play the role of activation functions, including ReLU, Leaky ReLU, PReLU, CReLU and maxout \cite{Lecun2015,Maas2013,He2015,Shang2016,Goodfellow2013}.
\begin{figure}[h!]
  \begin{minipage}{1.0\linewidth}
    \centering
    \centerline{\includegraphics[width=120mm]{./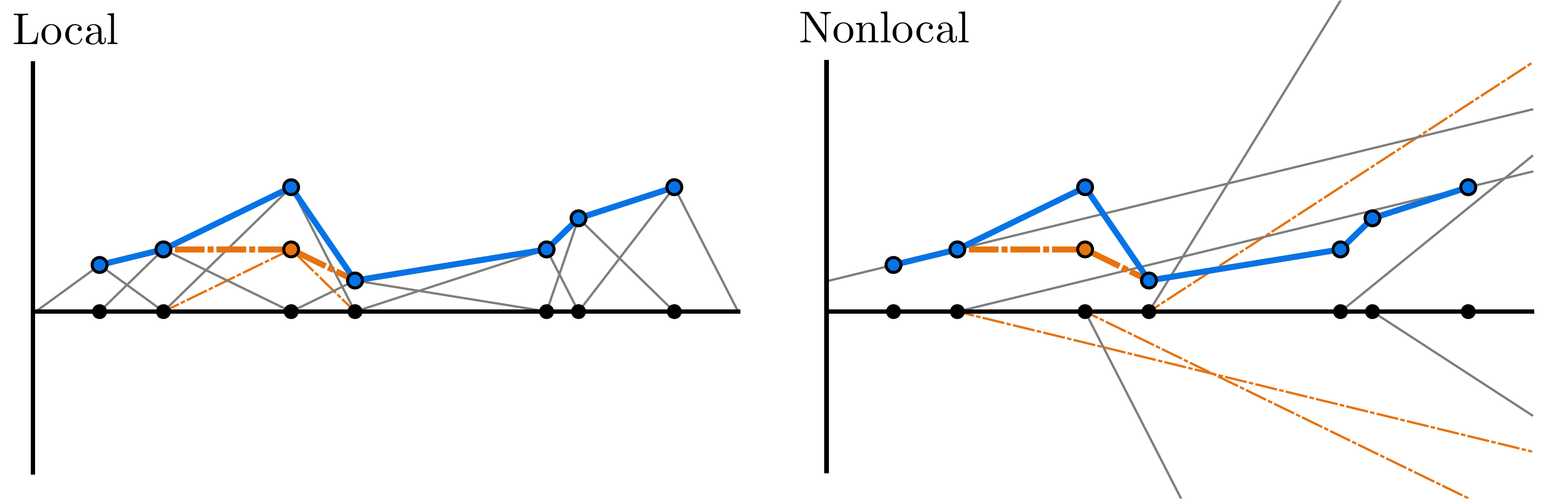}}
    \caption{Local and nonlocal linear expansion of a CPWL function (thick solid blue line) in the one-dimensional case. A point is moved (dashed thick orange line) and it results in the modification of 1 triangular basis function for the local representation and 3 ReLU-like functions for the nonlocal representation (thin dashed orange lines).}
    \label{fig:1dinterpolation}\medskip
  \end{minipage}
  \end{figure}
\subsection{On the Stability of Parametrizations}
While powerful in learning applications, nonlocal basis functions raise concerns about the stability of the model. Since the nonlocal atoms do not belong to any of the $L_p(\mathbb{R^d})$ Lebesgue spaces, the synthesis operator $T\colon \Theta \rightarrow\mathcal{H}$ is ill-conditioned. A small change in a parameter can lead to tremendous changes in the generated function. Nonlocal representations also lack stability when a CPWL function is used to interpolate data (Figure \ref{fig:1dinterpolation}). We show in Appendix \ref{ap:condition} that, in the one-dimensional case, the corresponding condition number for $K$ points is at least $\mathcal{O}(K^{3/2})$. Conversely, the local representation $f=\sum_{k} f(\M v_k) \beta_k$ offers an explicit solution with, as result, a condition number equal to 1. To further examine the stability of the local parametrization, we consider the parameter space $\ell_2(\mathbb{Z})$ of finite-energy sequences and equip the function space with the $L_2$ norm. Ideally, the collection $(\beta_k)$ of functions would form an orthonormal basis. However, this does not hold for hat functions. The next best thing one can hope for is that the generating functions form a Riesz basis, which means that $(\beta_k)$ is the image of an orthonormal basis under a bounded invertible linear operator. This guarantees that the synthesis operator $T$ is a bounded linear bijection from $\Theta$ to $\mathcal{H}$. This strong property has emerged as a standard requirement in many signal-processing theories and finite-elements methods \cite{Aldroubi1994,Aldroubi1996,Unser1997,Jia2011RieszBO,Fukuda2013} within a broad spectrum of applications.
It is known that uniform B-splines of any degree and their high-dimensional box-spline extensions (with suitable directions) generate Riesz bases \cite{dahmen1983translates,guan2005characterization}. Yet, to the best of our knowledge, the exact Riesz bounds of linear box splines are not known in high dimensions and the case of irregular triangulations has not been addressed in full generality so far.

In this paper, we propose to investigate precisely and in any dimension the stability of the local parametrization of CPWL functions with the hope to bring detailed results against which other parametrizations could be compared.

The paper is organized as follows: In Section, \ref{sc:prelim} we present the relevant mathematical concepts. We then restrict our investigation to affine functions on simplices in Section \ref{sc:AffonSimplex} and extend it to CPWL functions on any triangulation in Section \ref{sc:Irregular}. Finally, we discuss the uniform-grid setting in Section \ref{sc:linearbox-splinesStability}.

\section{Mathematical Preliminaries}
\label{sc:prelim}
\subsection{Simplicial Continuous and Piecewise-Linear Functions}
\label{sc:simplicialCPWL}
\begin{definition}
\label{df:cpwl}
A function $f \colon \R^d \rightarrow \R$ is continuous and piecewise-linear (CPWL) if it is continuous and if there exist distinct affine functions $f_1, f_2,\ldots,f_p$ and subsets
$R_1, R_2,\ldots,R_p$ of $\R^d$ such that
\begin{enumerate}[label=(\roman*)]
\item each $R_k$ is closed with nonempty interior;
\item for $k\neq q$, $R_k$ and $R_q$ have disjoint interiors;
\item the space is partitioned as $\bigcup_{k=1}^p R_k = \mathbb{R}^d$;
\item the function $f$ agrees with $f_k$ on $R_k$.
\end{enumerate}
\end{definition}
We extend this definition to compact input domains of $\R^d$ and to any function $f \colon \R^d \rightarrow \R$ whose restriction to any
compact set is CPWL (sometimes referred to as locally piecewise-affine functions \cite{adeeb2017locally}).
\begin{definition}
  A set $\mathcal{V}\subset\R^d$ is locally finite if its intersection with any compact set of $\R^d$ is finite\footnote{Note that the meaning of the term ``locally finite'' depends on the mathematical field.}.
\end{definition}
In the sequel, $\mathcal{V}$ will always denote a locally finite set of $\R^d$ indexed by $I\subset\N$ that is not contained in any $(d-1)$-dimensional affine subspace of $\R^d$. To each point $\M v_k \in \mathcal{V}$ we associate a target value $y_k \inR$. Under Definition \ref {df:cpwl}, it is not obvious to find a CPWL function $f$ satisfies $f(\M v_k) = y_k$. The local representation, on the contrary, offers a more systematic way to address this problem. This requires first to form a triangulation of the set $\mathcal{V}$.

\subsubsection{Partition of the Input Domain into Simplices}
A polyhedron is the intersection of finitely many half spaces. A polytope is a bounded polyhedron. Simplices are the polytopes that have the fewest number of faces; in growing number of dimension $d=0,\ldots,3$ they include points, segments, triangles and tetrahedrons. Formally, a $d$-simplex $s$ of $\mathbb{R}^d$ is the convex hull of $(d+1)$ affinely independent vertices
\begin{equation}
s = \conv{\M v_1,\ldots,\M v_{d+1}} = \left\{\sum_{k=1}^{d+1} \lambda_k \M v_k \colon \lambda_k\geq 0, \sum_{k=1}^{d+1} \lambda_k \M= 1\right \}.
\end{equation}
 A $k$-face of a simplex is the convex hull of $(k+1)$ of its vertices, which is a $k$-simplex embeded in $\R^d$. The volume of a simplex admits the explicit form
$
\vol{s} = \frac{1}{d!}|\mathrm{det}(
\M v_2-\M v_1,\ldots,\M v_{d+1} - \M v_1)|
$.

\begin{definition}[adapted from \cite{de2010triangulations}]
A triangulation of a locally finite set $\mathcal{V}\subset\R^d$ of points is a collection $\mathcal{S}$ of d-simplices whose vertices are points in $\mathcal{V}$ and such that
\begin{enumerate}[label=(\roman*)]
\item the union of all the simplices equals $\conv{\mathcal{V}}$ (union property);
\item any pair of simplices intersects in a (possibly
empty) common face (intersection property).
\end{enumerate}
\end{definition}
A triangulation of $\mathcal{V}$ is said to be \textit{full} if all the points of $\mathcal{V}$ are vertices of it, a property that we shall always assume to hold true in the sequel.
For a given locally finite set $\mathcal{V}\subset\R^d$ of vertices of dimension $d$, the existence of a triangulation is granted but in general not unique. In practice, most applications fall into one of the two.
\begin{itemize}
\item \textbf{Regular Sampling Locations}: The vertices coincide with the sites of a lattice, for which explicit triangulations are known ({\it e.g.}, the Kuhn triangulation \cite{kuhn1960,Allgower1978}).
\item \textbf{Irregular (Random) Sampling Locations}: A Delaunay triangulation always exists in any number of dimensions for a finite set $\mathcal{V}$, and there are efficient algorithms to compute it \cite{Watson1981, Rajan1994}.
\end{itemize}

\begin{definition}
Let $\mathcal{S}$ be a triangulation of a locally finite set $\mathcal{V}\subset\R^d$. The star $\mathrm{St}(\M v)$ of a vertex $\M v\in \mathcal{V}$ is the set of those simplices of $\mathcal{S}$ that contain $\M v$.
\end{definition}
We define the volume of the star of a vertex as $\vol{\mathrm{St}(\M v)}= \sum_{s\in\mathrm{St}(\M v)} \vol{s}$. Its cardinality is denoted by $|\mathrm{St}(\M v)|$.
\begin{figure}[h!]
\begin{minipage}{1.0\linewidth}
  \centering
  \centerline{\includegraphics[width=150mm]{./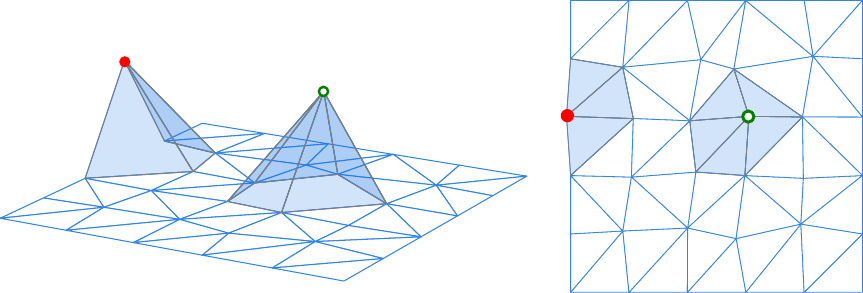}}
  \caption{Left: 3D view of two hat functions on a finite triangulation. Note that, although the left hat function seems discontinuous, it is continuous on the triangulation. Right: star of two vertices of the triangulation (filled area).}
  \label{fig:hat_star}\medskip
\end{minipage}
\end{figure}

\subsubsection{Hat Basis Functions}
The set of CPWL functions on a triangulation $\mathcal{S}$ with vertices $\mathcal{V}$ is defined as 
\begin{equation}
\mathrm{CPWL}(\mathcal{S}) = \{f\in \mathbb{R}^{\conv{\mathcal{V}}}\colon f \hbox{ is affine on any } s\in\mathcal{S} \hbox{ and continuous on } \conv{\mathcal{V}}\}.
\end{equation}
It is said to be the space of linear simplicial splines on $\mathcal{S}$. Note that the functions in $\mathrm{CPWL}(\mathcal{S})$ are only defined over the convex hull $\conv{\mathcal{V}}$ of $\mathcal{V}$, which can range from a compact set to the whole space $\R^d$.

It is known that any polyhedron can be partitioned into simplices \cite{Edmonds1970}. As a result, any CPWL function can be viewed as a linear simplicial spline. Two affine functions that coincide on the vertices of a $d$-simplex are equal, which means that any element of $\mathrm{CPWL}(\mathcal{S})$ is uniquely determined by the values it assumes at the vertices $\mathcal{V}$ of $\mathcal{S}$. This leads to the local linear expansion
\begin{equation}
\forall f \in \mathrm{CPWL}(\mathcal{S})\colon f= \sum_{\M v\in \mathcal{V}}f(\M v)\beta^{\mathcal{S}}_{\M v},
\end{equation}
where the hat functions $\beta^{\mathcal{S}}_{\M v}\in \mathbb{R}^{\conv{\mathcal{V}}}$ (see Figure \ref{fig:hat_star}) are defined on every simplex $s\in\mathcal{S}$ by
\begin{equation}
\label{eq:linear_exp}
{\beta^{\mathcal{S}}_{\M v}}_{|s} = \begin{cases}
\lambda_{\M v}^s, & s\in\mathrm{St}(\M v)\\
0, & \text{ otherwise},
\end{cases}
\end{equation}
where $\lambda_{\M v}^s$ is the unique affine function that is vanishes at all vertices of $s$ but takes value $1$ at vertex $\M v$. In other words, $\lambda_{\M v}^s$ outputs the barycentric coordinate of simplex $s$ attached to vertex $\M v$ for a given $\M x\in s$. Note that depending on the set of vertices, the hat basis functions might not be defined over the whole $\R^d$ and, in the sequel, for any $f\in \mathbb{R}^{\conv{\mathcal{V}}}$ we use the notation $\|f\|_{L_p} = (\int_{\M x \in \conv{\mathcal{V}}} |f(\M x)|^p\dint \M x)^{1/p}$.
The hat basis functions have many desirable properties, such as,
\begin{itemize}
  \item for $\M u,\M v \in \mathcal{V}, \beta^{\mathcal{S}}_{\M v}(\M u) = \begin{cases}
    1,& \M v = \M u\\
    0,& \text{otherwise};
  \end{cases}$
\item continuity;
\item compact support $\mathrm{supp}(\beta^{\mathcal{S}}_{\M v}) = \mathrm{St}(\M v)$;
\item minimal support among all nonzero functions of $\mathrm{CPWL}(\mathcal{S})$;
\item ability to reproduce polynomials of degree up to $1$ on $\conv{\mathcal{V}}$, so that
\begin{equation}
\forall (\M a,b) \inR^d\times\R, \;\forall \M x\in\conv{\mathcal{V}}\colon \M a^T \M x + b = \sum_{v\in \mathcal{V}}(a^T \M v + b)\beta^{\mathcal{S}}_{\M v}(\M x),
\end{equation}
which includes the partition-of-unity condition $\sum_{v\in \mathcal{V}}\beta^{\mathcal{S}}_{\M v}=1$.
\end{itemize}

When $\mathrm{St}(\M v)$ is convex, the hat function simply reads \cite{He2020}
\begin{equation}
\beta^{\mathcal{S}}_{\M v} =
\left(\min_{s\in \mathrm{St}(\M v)} \lambda_{\M v}^s\right)_+.
\end{equation}

\subsection{Riesz Bases}
For a set $I\subset\N$, we denote by $\ell_2(I)$ the set of complex-valued sequences indexed by $I$ with finite energy.
\begin{definition}
  Let $\mathcal{H}$ be a separable Hilbert space over $\mathbb{C}$ and $I\subset \N$. A collection of functions $\{\varphi_k\}_{k\in I}$ in $\mathcal{H}$ is a Riesz basis if
  \begin{enumerate}[label=(\roman*)]
    \item $\overline{\mathrm{Span}(\{\varphi_k\}_{k\in I})} = \mathcal{H}$ (completeness),
    \item there exist $0<A\leq B<+\infty$ such that, for any $c\in \ell_2(I)$, 
    \begin{equation}
      \label{eq:RBdf}
      A\| c\|_{\ell_2}\leq \|\sum_{k\in I} c_k\varphi_k\|_{L_2}\leq B \| c\|_{\ell_2} \text{ (Riesz sequence property)},
    \end{equation}
    \end{enumerate}
    where $\| c\|_{\ell_2} =\left(\sum_{k\in I}\abs{c_k}^2\right)^{1/2}$.
\end{definition}
The tightest constants $A$ and $B$ that satisfy \eqref{eq:RBdf} are called the {\it Riesz bounds}. We call the ratio $B/A$ the {\it Riesz condition number}. It is indeed the $\ell_2\rightarrow L_2$ condition number of the synthesis operator $T\colon c \mapsto \sum_{k\in I}c_k\varphi_k$. The Riesz-basis property guarantees that $T$ is a bounded linear bijection, which means that there is a unique and stable link between the parameters and the functions being generated. Note that the condition number is $1$ if and only if the collection of functions $(\varphi_k)_{k\in I}$ forms an orthonormal basis (up to a scaling factor).

When the collection of functions is formed by the multi-index shifts of a single generating function ($\{\varphi_{\M k}\}=\{\varphi(\cdot-\M k)\colon \M k\inZ^d\})$, the Riesz-sequence property is well characterized via the discrete-time Fourier transform $\widehat g$ of the sampled autocorrelation of $\varphi$, as given by
\begin{equation}
  \widehat g\colon \V \omega \mapsto \sum_{\M k\inZ^d}\dotprod{\varphi}{\varphi(\cdot-\M k)}\ee^{-\ii\M k^T \V \omega}.
\end{equation}
In this uniform scenario, the Fourier equivalent of the
Riesz sequence condition is \cite{Aldroubi1996,vandevilleHexSplinesNovelSpline2004}
\begin{equation}
  \label{eq:RBFourier}
  0 < A^2 = \essinf_{\V \omega\in [0,2\pi]^d}\widehat g(\V \omega)\leq B^2 = \esssup_{\V \omega\in [0,2\pi]^d}\widehat g(\V \omega)<+\infty.
\end{equation}
\section{Affine Functions on Simplices}
\label{sc:AffonSimplex}
Considerations on affine functions on simplices will help us lay the foundations of the analysis of the stability of the local parametrization on simplicial partitions (Sections \ref{sc:Irregular} and \ref{sc:linearbox-splinesStability}).
\begin{proposition}
  \label{pr:extremeL2simplex}
  Let $f:\mathbb{R}^d\rightarrow \mathbb{C}$ be an affine function and $s = \conv{\M v_1,\ldots,\M v_{d+1}}$ a $d$-simplex. Then
  \begin{align}
  \frac{\vol{s}}{(d+2)(d+1)} \sum_{k=1}^{d+1} |f(\M v_k)|^2 \leq \int_{\M x \in s} |f(\M x)|^2 \dint \M x \leq  \frac{\vol{s}}{(d+1)} \sum_{k=1}^{d+1} |f(\M v_k)|^2,
  \end{align}
  \end{proposition}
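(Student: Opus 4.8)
The plan is to exploit the fact that an affine function restricted to $s$ is reproduced exactly by the barycentric coordinates of $s$. Writing $\lambda_1,\ldots,\lambda_{d+1}$ for these coordinate functions (so that each $\M x\in s$ decomposes as $\M x=\sum_{k=1}^{d+1}\lambda_k(\M x)\M v_k$ with $\lambda_k\geq 0$ and $\sum_k\lambda_k=1$), the affinity of $f$ together with the partition-of-unity identity $\sum_k\lambda_k=1$ yields the pointwise relation $f(\M x)=\sum_{k=1}^{d+1}\lambda_k(\M x)\,f(\M v_k)$ on $s$, valid for complex-affine $f$ as well. Setting $c_k=f(\M v_k)$, the problem is thereby reduced to controlling the quadratic form $\int_{s}\bigl|\sum_k\lambda_k c_k\bigr|^2\dint\M x$ in terms of $\sum_k\abs{c_k}^2$.

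The core computation is then the Gram matrix $G_{ij}=\int_{s}\lambda_i\lambda_j\dint\M x$ of the barycentric coordinate functions. I would obtain it through the affine change of variables mapping $s$ onto the standard simplex, whose Jacobian equals $d!\,\vol{s}$, followed by the Dirichlet integration formula
\begin{equation}
\int_{s}\lambda_1^{a_1}\cdots\lambda_{d+1}^{a_{d+1}}\dint\M x=\vol{s}\,\frac{d!\,a_1!\cdots a_{d+1}!}{(a_1+\cdots+a_{d+1}+d)!}.
\end{equation}
This gives $\int_{s}\lambda_i^2\dint\M x=\tfrac{2\vol{s}}{(d+1)(d+2)}$ and $\int_{s}\lambda_i\lambda_j\dint\M x=\tfrac{\vol{s}}{(d+1)(d+2)}$ for $i\neq j$, that is, $G=\tfrac{\vol{s}}{(d+1)(d+2)}(\mathrm{Id}+\mathbf{1}\mathbf{1}^\top)$, where $\mathbf{1}$ is the all-ones vector of length $d+1$.

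Plugging this into the quadratic form produces the exact identity
\begin{equation}
\int_{s}\abs{f(\M x)}^2\dint\M x=\frac{\vol{s}}{(d+1)(d+2)}\Bigl(\sum_{k=1}^{d+1}\abs{c_k}^2+\Bigl|\sum_{k=1}^{d+1}c_k\Bigr|^2\Bigr),
\end{equation}
from which both inequalities follow at once. The lower bound is obtained by discarding the nonnegative term $\bigl|\sum_k c_k\bigr|^2$, and the upper bound follows from the Cauchy--Schwarz estimate $\bigl|\sum_k c_k\bigr|^2\leq(d+1)\sum_k\abs{c_k}^2$, after which the prefactor simplifies to $\vol{s}/(d+1)$.

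I do not anticipate a genuine obstacle: once the exact identity is in hand, the two inequalities are automatic, and they are moreover seen to be sharp---equality on the left precisely when $\sum_k c_k=0$, and on the right precisely when all $c_k$ coincide, i.e. when $f$ is constant on $s$. The only point requiring care is the bookkeeping in the change of variables and in the Dirichlet formula; everything downstream is elementary linear algebra.
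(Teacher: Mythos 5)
Your proof is correct and takes essentially the same route as the paper's: both reduce the integral to the exact identity $\int_{s}|f(\M x)|^2\dint\M x=\frac{\vol{s}}{(d+1)(d+2)}\,\M f_s^H(\M I_{d+1}+\M 1_{d+1})\M f_s$ --- the paper through Lemmas \ref{lm:Lpsimplex}--\ref{lm:innerprodcutsimplexcomplex} (built on the cited Lasserre--Baldoni formula), you through the Dirichlet integral for barycentric monomials, which is the same computation. The only difference is cosmetic and occurs in the last step: the paper reads off the eigenvalues $1$ and $d+2$ of the circulant matrix $\M P_{d+1}$, whereas you expand the rank-one contribution $\bigl|\sum_k c_k\bigr|^2$ and bound it by $0$ from below and by $(d+1)\sum_k|c_k|^2$ (Cauchy--Schwarz) from above, which identifies the same extremal directions and, as you note, the sharpness cases.
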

Prior to proving Proposition \ref{pr:extremeL2simplex}, we provide a series of useful results regarding the computation of some integrals of affine functions over simplices.

For a linear function $f\colon \mathbb{R}^d\rightarrow \mathbb{R}$, an integer $p\inN$, and a $d$-simplex $s=\conv{\M v_1,\ldots,\M v_{d+1}}$, it is known that \cite{Lasserre2001,Baldoni2010} 
\begin{equation}
\label{eq:integral-linpower-big-sum}
\int_s f(\M x)^p \dint \M x=  \vol{s} {p+d \choose d}^{-1}
\sum_{\M k \in \N^{d+1}, |\M k| = p}f(\M v_1)
^{k_1}\cdots f(\M v_{d+1})^{k_{d+1}},
\end{equation}
where we use the notation $\M k=(k_1,\ldots,k_{d+1})$ and $\abs{\M k}:= k_1+\cdots+k_{d+1}$.
We can extend this to affine functions.
\begin{lemma}
\label{lm:Lpsimplex}
Let $f:\mathbb{R}^d\rightarrow \mathbb{R}$ be an affine function and $s= \conv{\M v_1,\ldots,\M v_{d+1}}$ a $d$-simplex. For any $p\inN$, we have that
\begin{equation}
\int_{s} f(\M x)^p \dint \M x= \vol{s} {p+d \choose d}^{-1} 
\sum_{\M k \in \N^{d+1}, |\M k| = p}f(\M v_1)
^{k_1}\cdots f(\M v_{d+1})^{k_{d+1}}.
\end{equation}
\end{lemma}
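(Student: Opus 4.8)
The plan is to reduce the affine case to the linear identity \eqref{eq:integral-linpower-big-sum} by homogenizing one dimension up. Write the affine function as $f(\M x) = \M a^T \M x + b$ and introduce the genuinely linear function $\tilde f \colon \R^{d+1}\to\R$, $\tilde f(\M z, t) = \M a^T \M z + b t$, so that $\tilde f(\M x, 1) = f(\M x)$. I would lift each vertex to $\tilde{\M v}_k = (\M v_k, 1)\in\R^{d+1}$ and form the cone $\hat s = \conv{\M 0, \tilde{\M v}_1,\ldots,\tilde{\M v}_{d+1}}$ over the lifted copy of $s$ with apex at the origin. Since the $\M v_k$ are affinely independent in $\R^d$, the lifted points $\tilde{\M v}_k$ are linearly independent in $\R^{d+1}$, so $\hat s$ is a bona fide $(d+1)$-simplex and \eqref{eq:integral-linpower-big-sum} applies to it in dimension $d+1$. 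By construction $\tilde f(\M 0) = 0$ and $\tilde f(\tilde{\M v}_k) = f(\M v_k)$, and an elementary pyramid-volume computation gives $\vol{\hat s} = \vol{s}/(d+1)$.

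Applying \eqref{eq:integral-linpower-big-sum} to $\tilde f$ on the $(d+1)$-simplex $\hat s$, every multi-index that charges the apex vertex contributes a factor $\tilde f(\M 0)=0$ and thus vanishes; the surviving terms are exactly those indexed by $\M k\in\N^{d+1}$ with $|\M k| = p$, and they reproduce $\sum_{|\M k|=p} f(\M v_1)^{k_1}\cdots f(\M v_{d+1})^{k_{d+1}}$. It then remains to connect the left-hand integral over $\hat s$ to the desired integral over $s$. For this I would use the radial parametrization $(r,\M x)\mapsto r(\M x,1)$ of $\hat s$ by $(r,\M x)\in[0,1]\times s$, under which $\tilde f(r\M x, r) = r f(\M x)$ and the Jacobian equals $r^{d}$. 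Fubini then yields $\int_{\hat s}\tilde f^p = \bigl(\int_0^1 r^{p+d}\dint r\bigr)\int_s f^p = \frac{1}{p+d+1}\int_s f^p$.

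Combining the two computations gives $\int_s f^p \dint\M x = (p+d+1)\,\vol{\hat s}\,{p+d+1 \choose d+1}^{-1}\sum_{|\M k|=p}\prod_k f(\M v_k)^{k_k}$, and substituting $\vol{\hat s}=\vol{s}/(d+1)$ collapses the constant, since $\frac{p+d+1}{d+1}{p+d+1 \choose d+1}^{-1} = {p+d \choose d}^{-1}$, to exactly the claimed expression.

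I expect the only delicate points to be bookkeeping rather than conceptual: checking that the lifted simplex is nondegenerate so that the linear identity is legitimately applicable in dimension $d+1$, computing the Jacobian of the radial map correctly (the $r^d$ factor is what produces the right power of $r$, and the behaviour at $r=0$ is measure-zero and harmless), and simplifying the factorials to reduce ${p+d+1 \choose d+1}$ back to ${p+d \choose d}$. A fully self-contained alternative, which sidesteps the lift entirely, is to pass to barycentric coordinates on $s$, use $f(\M x)=\sum_k \lambda_k^s f(\M v_k)$ together with the multinomial theorem, and integrate the resulting monomials via the Dirichlet formula $\int_s (\lambda_1^s)^{k_1}\cdots(\lambda_{d+1}^s)^{k_{d+1}}\dint\M x = \vol{s}\,d!\,\frac{k_1!\cdots k_{d+1}!}{(|\M k|+d)!}$; this yields the same constant directly but does not reuse \eqref{eq:integral-linpower-big-sum}.
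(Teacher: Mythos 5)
Your proposal is correct, but it proves the lemma by a genuinely different route than the paper. The paper's own argument is a two-line reduction: for a non-constant affine $f$ it picks a root $\M x_0$ of $f$, writes $f(\M x)=\M a^T(\M x-\M x_0)$, and applies \eqref{eq:integral-linpower-big-sum} to the translated simplex $s-\M x_0$ (translation invariance of the volume does the rest); the constant case $f\equiv b$ is then handled separately via the counting identity $\sum_{\M k\in\N^{d+1},\,|\M k|=p}1={p+d \choose d}$. You instead homogenize one dimension up: lift the vertices to $(\M v_k,1)\in\R^{d+1}$, cone over them at the origin, apply the linear identity in dimension $d+1$, kill all multi-indices charging the apex since $\tilde f(\M 0)=0$, and push the integral back down through the radial map with Jacobian $r^d$, which produces the factor $\frac{1}{p+d+1}$ and lets $\frac{p+d+1}{d+1}{p+d+1\choose d+1}^{-1}={p+d\choose d}^{-1}$ collapse the constants. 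I checked the details: the lifted points are linearly independent precisely because the $\M v_k$ are affinely independent, $\vol{\hat s}=\vol{s}/(d+1)$ is correct, and the binomial simplification is right. What your approach buys is uniformity (no case split for constant $f$, which the paper needs because a constant nonzero affine function has no root $\M x_0$) at the cost of working one dimension higher and carrying a cone integration; the paper's translation trick is shorter and stays in $\R^d$. Your barycentric/Dirichlet alternative is also valid and is the standard self-contained proof, but it re-derives \eqref{eq:integral-linpower-big-sum} rather than reusing it, which is contrary to the spirit of the paper's reduction.
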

\begin{proof}
If the affine function $f$ is not constant, then it can be written as $f(\M x)=\M a^T (\M x - \M x_0)$. Equation \eqref{eq:integral-linpower-big-sum} can be applied after a change of variable, as in
\begin{align}
\int_{s} f(\M x)^p \dint \M x &= \int_{s} (\M a^T (\M x - \M x_0))^p \dint \M x = \int_{s-\M x_0} (\M a^T \M y)^p \dint \M y \\&= \vol{s-\M x_0} {p+d \choose d}^{-1}
\sum_{\M k \in \N^{d+1}, |\M k| = p}(\M a^T (\M v_{1} - \M x_0))
^{k_1}\cdots (\M a^T (\M v_{d+1} - \M x_0))^{k_{d+1}}\\
&=\vol{s} {p+d \choose d}^{-1} 
\sum_{\M k \in \N^{d+1}, |\M k| = p}f(\M v_1)
^{k_1}\cdots f(\M v_{d+1})^{k_{d+1}}.
\end{align}
where $s-\M x_0 \coloneqq \{\M x- \M x_0 \colon \M x \in s\}$.
If now the affine function $f$ is constant with $f(\M x) = b$, then $\int_{s} f(\M x)^p \dint \M x =  \vol{s}b^p$. We also have that 
\begin{equation}
\vol{s} {p+d \choose d}^{-1} \sum_{\M k \in \N^{d+1}, |\M k| = p}f(\M v_1)
^{k_1}\cdots f(\M v_{d+1})^{k_{d+1}} = \vol{s} {p+d \choose d}^{-1}\sum_{\M k \in \N^{d+1}, |\M k| = p} b^p= \vol{s} b^p,
\end{equation}
where we have used that $\sum_{\M k \in \N^{d+1}, |\M k| = p} 1= {p+d \choose d}$. This number is known in combinatorics as the combinations with replacement \cite{Heumann2016}. 
\end{proof}
We can now deduce an important property of the hat functions.
\begin{proposition}
\label{pr:Lpnorm}
The $L_p$ norm of the hat function $\beta^{\mathcal{S}}_{\M v}$ only depends on the dimension $d$ and the volume of its support. It reads
\begin{equation}
\|\beta^{\mathcal{S}}_{\M v}\|_{L_p} = \left({p+d \choose d}^{-1}\vol{\mathrm{St}(\M v)}\right )^{1/p}.
\end{equation}
\end{proposition}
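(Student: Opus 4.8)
The plan is to compute the $L_p$ norm directly by decomposing the integral over the support of the hat function into contributions from the individual simplices of the star, and then to invoke Lemma \ref{lm:Lpsimplex} on each piece. The whole argument is essentially an application of the integration formula already established, combined with the special structure of the vertex values of a barycentric coordinate.

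First I would recall that $\mathrm{supp}(\beta^{\mathcal{S}}_{\M v}) = \mathrm{St}(\M v)$ and that on each simplex $s\in\mathrm{St}(\M v)$ the hat function coincides with the barycentric coordinate $\lambda_{\M v}^s$, which is affine. Since the simplices of the star have pairwise disjoint interiors and their union is the support, and since $\lambda_{\M v}^s$ is nonnegative on $s$ (so that the absolute value is harmless), I can write
\[
\|\beta^{\mathcal{S}}_{\M v}\|_{L_p}^p = \int_{\mathrm{St}(\M v)} |\beta^{\mathcal{S}}_{\M v}(\M x)|^p \dint \M x = \sum_{s \in \mathrm{St}(\M v)} \int_s \bigl(\lambda_{\M v}^s(\M x)\bigr)^p \dint \M x.
\]

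The key step is then to evaluate each simplex integral with Lemma \ref{lm:Lpsimplex}. Labeling the vertices of $s$ so that $\M v$ is one of them, the function $\lambda_{\M v}^s$ takes the value $1$ at $\M v$ and $0$ at every other vertex of $s$. Hence in the combinatorial sum $\sum_{\M k\in\N^{d+1},\,|\M k| = p} \lambda_{\M v}^s(\M v_1)^{k_1}\cdots\lambda_{\M v}^s(\M v_{d+1})^{k_{d+1}}$ every monomial vanishes except the single one that assigns the whole exponent $p$ to the vertex $\M v$ (a consequence of $0^{k}=0$ for $k\geq 1$ together with the constraint $|\M k|=p$). That surviving term equals $1$, so Lemma \ref{lm:Lpsimplex} yields $\int_s (\lambda_{\M v}^s)^p \dint \M x = {p+d \choose d}^{-1}\vol{s}$.

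Finally I would sum over the star: since the factor ${p+d \choose d}^{-1}$ is independent of $s$, it pulls out of the sum and the remaining $\sum_{s\in\mathrm{St}(\M v)}\vol{s}$ is precisely $\vol{\mathrm{St}(\M v)}$ by definition. Taking the $p$-th root gives the claimed formula, which manifestly depends only on $d$, $p$, and $\vol{\mathrm{St}(\M v)}$. The only mildly delicate point is the bookkeeping in the combinatorial sum --- recognizing that exactly one monomial survives --- but this is immediate once the vertex values of $\lambda_{\M v}^s$ are written out, and the rest of the computation is routine.
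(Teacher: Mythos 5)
Your proof is correct and follows essentially the same route as the paper: split the integral over the simplices of the star, apply Lemma \ref{lm:Lpsimplex} on each, and observe that only the monomial assigning the full exponent $p$ to $\M v$ survives because $\lambda_{\M v}^s$ vanishes at the other vertices. The paper's proof leaves that last combinatorial observation implicit; your version simply spells it out.
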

\begin{proof}
We split the integral over the simplices of the support of $\beta^{\mathcal{S}}_{\M v}$ and apply Lemma \ref{lm:Lpsimplex}, which leads
\begin{align}
\|\beta^{\mathcal{S}}_{\M v}\|_{L_p} ^p &=  \int_{\conv{\mathcal{V}}} |\beta^{\mathcal{S}}_{\M v}(\M x)|^p\dint \M x = \int_{\conv{\mathcal{V}}} \beta_{\M v}(\M x)^p\dint \M x \nonumber \\
&=\sum_{s\in \mathcal{S}} \int_{s}\beta^{\mathcal{S}}_{\M v}(\M x)^p\dint \M x = \sum_{s\in \mathrm{St}(\M v)} \int_{s}\beta^{\mathcal{S}}_{\M v}(\M x)^p\dint \M x \nonumber\\
&=\sum_{s\in \mathrm{St}(\M v)} {p+d \choose d}^{-1}\vol{s}= {p+d \choose d}^{-1}\vol{\mathrm{St}(\M v)}.
\end{align}

\end{proof}
In the sequel, we shall make use of Proposition \ref{pr:Lpnorm} through the two following relations:
\begin{itemize}
\item the $L_2$ norm \begin{equation}
\label{fm:L2norm}
\|\beta^{\mathcal{S}}_{\M v}\|_{L_2}^2 = \frac{2\vol{\mathrm{St}(\M v)}}{(d+1)(d+2)};
\end{equation}
\item the inner-product relation
\begin{equation}
  \label{eq:L1norm}
\dotprod{\beta^{\mathcal{S}}_{\M v}}{\sum_{\M u \in \mathcal{V}}\beta^{\mathcal{S}}_{\M u}}_{\conv{\mathcal{V}}} = \dotprod{\beta^{\mathcal{S}}_{\M v}}{1}_{\conv{\mathcal{V}}} = \|\beta^{\mathcal{S}}_{\M v}\|_{L_1} = \frac{\vol{\mathrm{St}(\M v)}}{(d+1)},
\end{equation}
where $\dotprod{f}{h}_{\conv{\mathcal{V}}} = \int_{\M x\in {\conv{\mathcal{V}}}}\overline{f(\M x)}h(\M x)\dint \M x$ and where the first equality results from the partition of unity of the hat functions.
\end{itemize}

Interestingly, when $p=2$, the integral in Lemma \ref{lm:Lpsimplex} is a quadratic form of the value of the function on the vertices and admits the matrix form shown in Lemma \ref{lm:L2simplex}.
\begin{lemma}
\label{lm:L2simplex}
Let $f:\mathbb{R}^d\rightarrow \mathbb{R}$ be an affine function, $s= \conv{\M v_1,\ldots,\M v_{d+1}}$ a $d$-simplex, and $\M f_s =( f(\M v_1),\cdots , f(\M v_{d+1}))\inR^{d+1}$. Then,
\begin{equation}
\int_{s} f(\M x)^2 \dint \M x = \frac{\vol{s}}{(d+1)(d+2)}  \M f_s^T \M P_{d+1} \M f_s,
\end{equation}
where
\begin{equation}
\M P_{d+1} =  \M 1_{d+1}  + \M I_{d+1}  = \begin{bmatrix}
2 &1 &\ldots&1\\
1 &\ddots & \ddots & \vdots\\
\vdots& \ddots & \ddots & 1\\
1 & \cdots & 1 & 2
\end{bmatrix}\inR^{(d+1)\times(d+1)}.
\end{equation}
\end{lemma}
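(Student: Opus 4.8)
The plan is to reduce everything to Lemma~\ref{lm:Lpsimplex} specialized to $p=2$ and then to recognize the resulting sum of degree-two monomials as the quadratic form $\M f_s^T \M P_{d+1}\M f_s$. First I would specialize the lemma, using that ${2+d \choose d}={d+2 \choose 2}=\frac{(d+1)(d+2)}{2}$, so that
\[
\int_{s} f(\M x)^2 \dint \M x = \frac{2\vol{s}}{(d+1)(d+2)} \sum_{\M k \in \N^{d+1},\, |\M k| = 2} f(\M v_1)^{k_1}\cdots f(\M v_{d+1})^{k_{d+1}}.
\]
All the remaining work then lies in evaluating this combinatorial sum and matching it to the proposed matrix.

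Next I would enumerate the multi-indices $\M k\in\N^{d+1}$ with $|\M k|=2$. There are exactly two types: the indices $\M k=2\M e_i$, each contributing $f(\M v_i)^2$, and the indices $\M k=\M e_i+\M e_j$ with $i<j$, each contributing $f(\M v_i)f(\M v_j)$. Hence the sum equals $\sum_{i=1}^{d+1} f(\M v_i)^2 + \sum_{1\le i<j\le d+1} f(\M v_i)f(\M v_j)$.

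It remains to match this expression against the claimed quadratic form. Writing $\M P_{d+1}=\M 1_{d+1}+\M I_{d+1}$, I would compute the two contributions separately: $\M f_s^T \M I_{d+1}\M f_s = \sum_{i} f(\M v_i)^2$, while $\M f_s^T \M 1_{d+1}\M f_s = \big(\sum_i f(\M v_i)\big)^2 = \sum_i f(\M v_i)^2 + 2\sum_{i<j} f(\M v_i)f(\M v_j)$. Adding these gives $\M f_s^T \M P_{d+1}\M f_s = 2\big(\sum_i f(\M v_i)^2 + \sum_{i<j} f(\M v_i)f(\M v_j)\big)$, which is precisely twice the combinatorial sum above. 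Substituting this back cancels the factor $2$ and yields the asserted identity.

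Since each step is an elementary specialization or a bookkeeping computation, there is no genuine obstacle here; the only point demanding care is the correct enumeration of the weight-two multi-indices together with the factor $2$ that the cross terms acquire when $\big(\sum_i f(\M v_i)\big)^2$ is expanded. I therefore expect the matching of the symmetric sum to the matrix $\M 1_{d+1}+\M I_{d+1}$ to be the crux of the argument, albeit a routine one.
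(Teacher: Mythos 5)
Your proposal is correct and follows essentially the same route as the paper: both specialize Lemma~\ref{lm:Lpsimplex} to $p=2$, identify the weight-two multi-index sum with $\sum_i f(\M v_i)^2+\sum_{i<j}f(\M v_i)f(\M v_j)$, and match it to $\M f_s^T(\M 1_{d+1}+\M I_{d+1})\M f_s$ after accounting for the factor $2$ from ${d+2\choose 2}$. The only cosmetic difference is that the paper writes the combinatorial sum as a symmetrized double sum rather than enumerating the two types of multi-indices.
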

\begin{proof}
Following Lemma \ref{lm:Lpsimplex}, on one hand we have that
\begin{align}
\int_{s} f(\M x)^2 \dint \M x &= \vol{s} {2+d \choose d}^{-1} 
\sum_{\M k \in \N^{d+1}, |\M k| = 2}f(\M v_1)
^{k_1}\cdots f(\M v_{d+1})^{k_{d+1}}\nonumber\\
&=\frac{2 \vol{s}}{(d+1)(d+2)}1/2 
\left(\sum_{p,q=1}^{d+1} f(\M v_p)f(\M v_q) + \sum_{p=1}^{d+1} f(\M v_p)^2 \right) \nonumber \\
&=\frac{\vol{s}}{(d+1)(d+2)}
\left(\left(\sum_{p=1}^{d+1} f(\M v_q)\right)^2 + \sum_{p=1}^{d+1} f(\M v_p)^2 \right).
\end{align}
On the other hand, we have that
\begin{align}
\M f_s^T \M P_{d+1} \M f_s &= \sum_{p=1}^{d+1} f(\M v_p)\left(\sum_{q=1}^{d+1} f(\M v_q) + f(\M v_p)\right)\\
&= \left(\sum_{p=1}^{d+1} f(\M v_p) \right)^2 + \sum_{p=1}^{d+1} f(\M v_p)^2.
\end{align}
\end{proof}
Lemma \ref{lm:L2simplex} can be extended to pairs of complex-valued functions.
\begin{lemma}
\label{lm:innerprodcutsimplexcomplex}
Let $f,g:\mathbb{R}^d\rightarrow \mathbb{C}$ be affine functions, $s= \conv{\M v_1,\ldots,\M v_{d+1}}$ a $d$-simplex, $\M f_s =( f(\M v_1),\cdots , f(\M v_{d+1}))\inR^{d+1}$, and $\M g_s =( g(\M v_1),\cdots , g(\M v_{d+1}))\inR^{d+1}$. It holds that
\begin{equation}
\int_{s} \overline{f(\M x)}g(\M x) \dint \M x = \frac{\vol{s}}{(d+1)(d+2)}  \M f_s^H \M P_{d+1} \M g_s,
\end{equation}
where
\begin{equation}
\M P_{d+1} =  \M 1_{d+1}  + \M I_{d+1} \inR^{(d+1)\times(d+1)}.
\end{equation}
\end{lemma}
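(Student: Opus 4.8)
The plan is to reduce this sesquilinear identity to the real quadratic form already established in Lemma~\ref{lm:L2simplex}. The starting observation is that both sides are sesquilinear in the pair $(f,g)$, that is, conjugate-linear in $f$ and linear in $g$: integration, matrix multiplication, and the sampling map $f\mapsto\M f_s$ are all linear, complex conjugation is conjugate-linear, and $\M P_{d+1}$ is a fixed real matrix. Since $\M P_{d+1}$ is symmetric, both forms are moreover Hermitian. This structural remark is what licenses the reduction strategy below.

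First I would upgrade Lemma~\ref{lm:L2simplex} to a real \emph{bilinear} statement by polarization. Writing $4fg=(f+g)^2-(f-g)^2$ for real affine $f,g$ and applying Lemma~\ref{lm:L2simplex} to the real affine functions $f+g$ and $f-g$, the symmetry $\M P_{d+1}^T=\M P_{d+1}$ collapses the four resulting quadratic terms into a single cross term, yielding $\int_{s}f(\M x)g(\M x)\dint\M x=\frac{\vol{s}}{(d+1)(d+2)}\M f_s^T\M P_{d+1}\M g_s$ for real affine $f,g$. This is the real-valued counterpart of the target identity.

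Next I would split the complex affine functions into real and imaginary parts, $f=f_R+\ii f_I$ and $g=g_R+\ii g_I$ with $f_R,f_I,g_R,g_I$ real affine, and expand the integrand as $\overline{f}g=(f_Rg_R+f_Ig_I)+\ii(f_Rg_I-f_Ig_R)$. Integrating term by term and inserting the real bilinear identity from the previous step turns each of the four integrals into a real quadratic form governed by $\M P_{d+1}$. On the algebraic side, expanding $\M f_s^H\M P_{d+1}\M g_s$ with $\M f_s=\M f_{s,R}+\ii\M f_{s,I}$ and $\M g_s=\M g_{s,R}+\ii\M g_{s,I}$ produces exactly the same four real quadratic forms, the Hermitian conjugate supplying the correct sign in front of $\M f_{s,I}$. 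Matching real and imaginary parts term by term then completes the argument.

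I expect no genuine obstacle here; the only point requiring care is bookkeeping the sesquilinear structure, namely checking that the complex conjugation in the integrand is what forces the Hermitian conjugate $\M f_s^H$ (rather than the transpose $\M f_s^T$) on the matrix side, and that the symmetry of $\M P_{d+1}$ is precisely what aligns the cross terms so that the imaginary parts agree. As a shortcut, one could instead invoke complex polarization: since both forms are Hermitian, they are determined by their diagonals, and on the diagonal it suffices to note $\int_{s}|f(\M x)|^2\dint\M x=\int_{s}f_R^2\dint\M x+\int_{s}f_I^2\dint\M x$, which follows immediately from Lemma~\ref{lm:L2simplex}.
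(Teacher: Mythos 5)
Your proposal is correct and follows essentially the same route as the paper: polarize Lemma~\ref{lm:L2simplex} to a real bilinear identity (the paper uses $2fg=(f+g)^2-f^2-g^2$ instead of your $4fg=(f+g)^2-(f-g)^2$, an immaterial difference), exploit the symmetry of $\M P_{d+1}$, and then pass to complex-valued functions via real and imaginary parts. The closing remark about complex polarization is a valid shortcut but not needed.
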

To prove Lemma \ref{lm:innerprodcutsimplexcomplex}, we first consider real-valued functions $f$ and $g$ and apply Lemma \ref{lm:L2simplex} to the left-hand side of the equality $2fg=\left((f+g)^2-f^2-g^2\right)$. The announced result is then reached because $\M P_{d+1}$ is a symmetric matrix. The generalization to complex-valued functions is directly obtained via the decomposition of $f$ and $g$ into their real and imaginary parts.

\begin{proof}[Proof of Proposition \ref{pr:extremeL2simplex}]
The matrix $\M P_{d+1} $ defined in Lemma \ref{lm:L2simplex} is a symmetric circulant matrix generated by the vector $(2,1,\ldots,1)$. Its eigenvalues are known to be \cite{Kra2012}
\begin{equation}
\lambda_m = 2 + \sum_{n=1}^{d}  \zeta_{d+1}^{mn} \quad \text{ with } m=1,\ldots,d+1 \text{ and where } \zeta_{d+1}= \ee^{\ii \frac{2\pi}{d+1}}.
\end{equation}
These expressions are further simplified to
\begin{align}
\lambda_m = 1 + \sum_{n=1}^{d+1}  \zeta_{d+1}^{mn} = \begin{cases}
d+2, & m = d+1\\
1, & \text{otherwise},
\end{cases}
\end{align}
which shows that $\min_{m\in\{1,\ldots,d+1\}}(\lambda_m) = 1$ and $\max_{m\in\{1,\ldots,d+1\}}(\lambda_m) = (d+2)$. As a result, for any $\M c \in\mathbb{C}^{d+1}$
\begin{equation}
\| \M c\|_{2}^2 \leq \M c^H\M P_{d+1}\M c \leq (d+2) \| \M c\|_{2}^2.
\end{equation}
We now conclude by applying Lemma \ref{lm:innerprodcutsimplexcomplex}.
\end{proof}
Unfortunately, the condition number $\sqrt{d+2}$ given by the inequalities in Proposition \ref{pr:extremeL2simplex} depends on the dimension $d$. However, the eigenvalues of $\M P_{d+1} $ are all $1$ except for one that is $(d+2)$. This means that, in general, $\frac{\sqrt{\M c^H\M P_{d+1}\M c}}{\| \M c\|}$ is very heavily distributed toward $1$ and only rarely approaches the upper bound $\sqrt{d+2}$, especially in high dimensions, as quantified in Lemma \ref{lm:conditionnumberdistribution}. One should recall that, even though the dimension worsens the condition number, the effect is stochastically negligible.
\begin{proposition}
  \label{lm:conditionnumberdistribution}
  Let $\M P_{d+1} =  \M 1_{d+1}  + \M I_{d+1} \inR^{(d+1)\times(d+1)}$, $\V C$ a random unit vector of $\C^{d+1}$ whose distribution is uniform on the unit sphere. Then, the first two moments of the random variable $\frac{\sqrt{\V C^H\M P_{d+1}\V C}}{\|\V C\|_2}$ can be bounded by quantities that do not depend on the dimension $d$, as expressed by
  \begin{equation}
    \mathbb{E}\left(\frac{\sqrt{\V C^H\M P_{d+1}\V C}}{\|\V C\|_2}\right) \leq \sqrt{2} \;\text{  and  }\;\mathbb{E}\left(\frac{\V C^H\M P_{d+1}\V C}{\|\V C\|_2^2}\right) = 2.
  \end{equation}
\end{proposition}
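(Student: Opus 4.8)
The plan is to compute the quadratic (second) moment exactly and then deduce the bound on the first moment as an immediate consequence of Jensen's inequality. Since $\V C$ is a unit vector, $\|\V C\|_2 = 1$, so the two quantities of interest reduce to $\mathbb{E}(\sqrt{\V C^H\M P_{d+1}\V C})$ and $\mathbb{E}(\V C^H\M P_{d+1}\V C)$, respectively.

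First I would establish the equality. Using the decomposition $\M P_{d+1} = \M 1_{d+1} + \M I_{d+1}$, I split $\V C^H\M P_{d+1}\V C = \V C^H\M 1_{d+1}\V C + \V C^H\M I_{d+1}\V C$. The second term equals $\|\V C\|_2^2 = 1$ deterministically. For the first term, since every entry of $\M 1_{d+1}$ equals $1$, we have $\V C^H\M 1_{d+1}\V C = \abs{\sum_k C_k}^2 = \sum_k \abs{C_k}^2 + \sum_{j\neq k}\overline{C_j}C_k$. By the symmetry of the uniform distribution on the sphere, $\mathbb{E}(\abs{C_k}^2) = 1/(d+1)$ for every $k$, so the diagonal part contributes $\mathbb{E}(\sum_k \abs{C_k}^2) = 1$. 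The off-diagonal terms vanish in expectation: the distribution of $\V C$ is invariant under multiplication of any single coordinate by a unit-modulus phase $\ee^{\ii\theta}$ (a unitary map), which forces $\mathbb{E}(\overline{C_j}C_k) = \ee^{\ii\theta}\mathbb{E}(\overline{C_j}C_k)$ for $j\neq k$ and all $\theta$, hence $\mathbb{E}(\overline{C_j}C_k) = 0$. Therefore $\mathbb{E}(\V C^H\M 1_{d+1}\V C) = 1$ and $\mathbb{E}(\V C^H\M P_{d+1}\V C) = 2$, which is the second claim.

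For the first moment, I would invoke the concavity of the square root together with Jensen's inequality, namely $\mathbb{E}(\sqrt{X}) \leq \sqrt{\mathbb{E}(X)}$ for any nonnegative random variable $X$. Taking $X = \V C^H\M P_{d+1}\V C$, which is nonnegative because $\M P_{d+1}$ is positive definite (its eigenvalues, computed in the proof of Proposition \ref{pr:extremeL2simplex}, are $1$ and $d+2$), and using $\mathbb{E}(X) = 2$ immediately yields $\mathbb{E}(\sqrt{\V C^H\M P_{d+1}\V C}) \leq \sqrt{2}$.

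The only substantive step is the evaluation of $\mathbb{E}(\abs{\sum_k C_k}^2) = 1$, which I expect to be the main obstacle---not because it is computationally hard, but because it hinges on correctly exploiting the unitary invariance of the uniform distribution on the complex sphere to conclude both that the diagonal second moments equal $1/(d+1)$ and that the cross-moments vanish. Once this is settled, the dimension-independence of both bounds follows transparently, since the contribution of the rank-one part $\M 1_{d+1}$ averages to exactly $1$ irrespective of $d$.
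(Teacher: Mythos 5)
Your proof is correct, and the overall mechanism is the same as the paper's: compute $\mathbb{E}(\V C^H\M P_{d+1}\V C)=2$ exactly by exploiting the invariance of the uniform distribution on the sphere, then get the first-moment bound from Jensen's inequality (the paper phrases it as $\mathbb{E}(X)^2<\mathbb{E}(X^2)$). The one genuine difference is the decomposition you use to evaluate the quadratic form. The paper works in the eigenbasis of $\M P_{d+1}$ (eigenvalues $1,\ldots,1,d+2$, already computed in the proof of Proposition \ref{pr:extremeL2simplex}), writes $\V C=\sum_k A_k\M u_k$, and uses exchangeability of the eigen-coordinates to get $\mathbb{E}(|A_{d+1}|^2)=\tfrac{1}{d+1}$, yielding $\V C^H\M P_{d+1}\V C=1+(d+1)|A_{d+1}|^2$. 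You instead split $\M P_{d+1}=\M 1_{d+1}+\M I_{d+1}$ in the canonical basis and show $\mathbb{E}\bigl(|\sum_k C_k|^2\bigr)=1$ by killing the cross-moments with a single-coordinate phase rotation. The two computations are literally equivalent, since the top eigenvector is $\tfrac{1}{\sqrt{d+1}}\V 1$ and hence $(d+1)|A_{d+1}|^2=|\sum_k C_k|^2$; your route is self-contained and does not need the spectral data (note that nonnegativity of $X$ is then also immediate from $X=\|\V C\|_2^2+|\sum_k C_k|^2$, so the appeal to positive definiteness is superfluous), while the paper's route makes visible that the fluctuation above the value $1$ is carried by a single eigen-direction, which is the qualitative point the proposition is meant to support.
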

\begin{proof}
  Knowing that $\M P_{d+1}$ is a real and symmetric matrix, there exists an orthonormal basis $(\M u_k)_{k=1}^{d+1}$ made of its eigenvectors. They can be chosen so that the associated eigenvalues are $1,\ldots,1,(d+2)$ (see proof of Proposition \ref{pr:extremeL2simplex}). The random vector $\V C$ can be decomposed as $\V C=\sum_{k=1}^{d+1}A_k \M u_k$, where the coordinates $A_k$ are random scalar variables that follow the same distribution and satisfy $\sum_{k=1}^{d+1}|A_k|^2=1$. We directly deduce that $\mathbb{E}(|A_k|^2)=\frac{1}{d+1}$. In addition, $\V C^H\M P_{d+1}\V C=\sum_{k=1}^{d}|A_k|^2 + (d+2)|A_{d+1}|^2=1+(d+1)|A_{d+1}|^2$, which yields that $\mathbb{E}\left(\frac{\V C^H\M P_{d+1}\V C}{\|\V C\|_2^2}\right) = 2$ since $\V C^H \V C = 1$. The conclusion follows from the well known inequality $\mathbb{E}(X)^2<\mathbb{E}(X^2)$ true for any random variable $X$.
\end{proof}
\section{Stability of the Local Parametrization on Irregular Triangulations}
\label{sc:Irregular}
\subsection{Triangulations with Any Number of Vertices}
\label{sc:irregularStability}
Triangulations in high dimensions have complex combinatorial structures that can induce a wide range of behaviors with the usual descriptors, for instance shape of the simplices, degree of the vertices, number of simplices shared by $2,\ldots,d$ vertices. Fortunately, the necessary and sufficient condition that a triangulation must satisfy for the local hat functions to form a Riesz basis only relies on a simple quantity: the volume of the star of the vertices.
\begin{theorem}
\label{th:RBIrregular}
Let $\mathcal{S}$ be a triangulation of a locally finite set $\mathcal{V}=\{\M v_k\}_{k\in I}$ of vertices in $\R^d$ and let $(\beta^{\mathcal{S}}_{\M v})_{\M v\in\mathcal{V}}$ be the corresponding hat functions. Then, the following statements are equivalent:
\begin{enumerate}[label=(\roman*)]
\item  the collection of functions $(\beta^{\mathcal{S}}_{\M v})_{\M v\in\mathcal{V}}$ forms a Riesz basis of the space $\mathrm{CPWL}(\mathcal{S})\cap L_2(\R^d)$;
\item $\begin{cases}V^{\mathrm{St}}_{\inf} = \inf_{\M v \in \mathcal{V}}\vol{\mathrm{St}(\M v)} > 0 \\
V^{\mathrm{St}}_{\sup} = \sup_{\M v \in \mathcal{V}}\vol{\mathrm{St}(\M v)} < + \infty;
\end{cases}$
\item $\begin{cases}\inf_{\M v \in \mathcal{V}}\|\beta^{\mathcal{S}}_{\M v}\|_{L_2} > 0 \\
\sup_{\M v \in \mathcal{V}}\|\beta^{\mathcal{S}}_{\M v}\|_{L_2} < + \infty.
\end{cases}$
\end{enumerate}
When these statements hold, for any $c \in \ell_2(I)$ we have that
\begin{equation}
\sqrt{\frac{V^{\mathrm{St}}_{\inf}}{(d+1)(d+2)}}\|c\|_{\ell_2} \leq \|\sum_{v\in\mathcal{V}}c_{\M v} \beta^{\mathcal{S}}_{\M v}\|_{L_2}\leq \sqrt{\frac{V^{\mathrm{St}}_{\sup}}{(d+1)}}\| c\|_{\ell_2}
\end{equation}
or, equivalently, that
\begin{equation}
\frac{1}{\sqrt{2}}\inf_{\M v \in \mathcal{V}}\|\beta^{\mathcal{S}}_{\M v}\|_{L_2}\| c\|_{\ell_2} \leq \|\sum_{v\in\mathcal{V}}c_{\M v} \beta^{\mathcal{S}}_{\M v}\|_{L_2}\leq \sqrt{\frac{d+2}{2}} \sup_{\M v \in \mathcal{V}}\|\beta^{\mathcal{S}}_{\M v}\|_{L_2} \| c\|_{\ell_2}.
\end{equation}
The Riesz condition number $r$ satisfies
\begin{equation}
r \leq \sqrt{d+2}\sqrt{\frac{V^{\mathrm{St}}_{\sup}}{V^{\mathrm{St}}_{\inf}}} = \sqrt{d+2}\times \frac{ \sup_{\M v \in \mathcal{V}}\|\beta^{\mathcal{S}}_{\M v}\|_{L_2}}{ \inf_{\M v \in \mathcal{V}}\|\beta^{\mathcal{S}}_{\M v}\|_{L_2}}.
\end{equation}
\end{theorem}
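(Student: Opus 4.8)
The plan is to reduce everything to the per-simplex estimate of Proposition~\ref{pr:extremeL2simplex} and then pass to a global bound by summing over the triangulation. The starting observation is that, for any coefficient sequence $c$, the function $f = \sum_{\M v \in \mathcal{V}} c_{\M v}\beta^{\mathcal{S}}_{\M v}$ is affine on each simplex $s = \conv{\M v_1,\ldots,\M v_{d+1}}$ and, because of the interpolation property $\beta^{\mathcal{S}}_{\M v}(\M u) = \delta_{\M v \M u}$, its values at the vertices of $s$ are exactly the corresponding coefficients. Applying Proposition~\ref{pr:extremeL2simplex} on $s$ therefore bounds $\int_s |f|^2$ from below and above by $\frac{\vol{s}}{(d+1)(d+2)}\sum_{\M v \in s}|c_{\M v}|^2$ and $\frac{\vol{s}}{d+1}\sum_{\M v\in s}|c_{\M v}|^2$, respectively.

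First I would sum these inequalities over all $s \in \mathcal{S}$, using $\|f\|_{L_2}^2 = \sum_{s} \int_s |f|^2$, and then interchange the order of summation. Since all terms are nonnegative, Tonelli's theorem applies freely, and the double sum $\sum_s \vol{s}\sum_{\M v \in s}|c_{\M v}|^2$ collapses to $\sum_{\M v} |c_{\M v}|^2 \sum_{s \in \mathrm{St}(\M v)}\vol{s} = \sum_{\M v}|c_{\M v}|^2 \vol{\mathrm{St}(\M v)}$, the key point being that local finiteness makes each star finite and each simplex carry finitely many vertices. Bounding $\vol{\mathrm{St}(\M v)}$ between $V^{\mathrm{St}}_{\inf}$ and $V^{\mathrm{St}}_{\sup}$ then gives the displayed two-sided Riesz-sequence estimate, so statement (ii) implies the Riesz-sequence property with the announced bounds $A = \sqrt{V^{\mathrm{St}}_{\inf}/((d+1)(d+2))}$ and $B = \sqrt{V^{\mathrm{St}}_{\sup}/(d+1)}$; the equivalent form in terms of $\|\beta^{\mathcal{S}}_{\M v}\|_{L_2}$ and the condition-number bound $r \le B/A = \sqrt{d+2}\,\sqrt{V^{\mathrm{St}}_{\sup}/V^{\mathrm{St}}_{\inf}}$ follow by substituting formula~\eqref{fm:L2norm}.

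Next I would close the loop of equivalences. The equivalence (ii)$\Leftrightarrow$(iii) is immediate from $\|\beta^{\mathcal{S}}_{\M v}\|_{L_2}^2 = 2\vol{\mathrm{St}(\M v)}/((d+1)(d+2))$ in \eqref{fm:L2norm}. For (i)$\Rightarrow$(ii) I would test the Riesz inequalities on the single-vertex sequences $c = (\delta_{\M v \M u})_{\M u}$: these give $A \le \|\beta^{\mathcal{S}}_{\M v}\|_{L_2} \le B$ for every $\M v$, so a genuine pair of Riesz bounds forces $\inf_{\M v}\|\beta^{\mathcal{S}}_{\M v}\|_{L_2} > 0$ and $\sup_{\M v}\|\beta^{\mathcal{S}}_{\M v}\|_{L_2} < \infty$, which is (iii) and hence (ii). It then remains, for (ii)$\Rightarrow$(i), to verify completeness: the closed span of $(\beta^{\mathcal{S}}_{\M v})$ must fill $\mathrm{CPWL}(\mathcal{S}) \cap L_2(\R^d)$.

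The completeness step is where I expect the only real difficulty. Given $f \in \mathrm{CPWL}(\mathcal{S}) \cap L_2(\R^d)$, the local expansion gives $f = \sum_{\M v} f(\M v)\beta^{\mathcal{S}}_{\M v}$ pointwise, but one must show the coefficients lie in $\ell_2$. I would obtain this by applying the lower per-simplex bound over an exhausting sequence of finite subcollections $\mathcal{S}' \subset \mathcal{S}$: each restriction gives $\sum_{s \in \mathcal{S}'}\frac{\vol{s}}{(d+1)(d+2)}\sum_{\M v \in s}|f(\M v)|^2 \le \|f\|_{L_2}^2$, and letting $\mathcal{S}'$ exhaust $\mathcal{S}$, monotone convergence (again legitimate by nonnegativity and local finiteness of each star) yields $\frac{V^{\mathrm{St}}_{\inf}}{(d+1)(d+2)}\sum_{\M v}|f(\M v)|^2 \le \|f\|_{L_2}^2 < \infty$, so $c = (f(\M v)) \in \ell_2$ whenever $V^{\mathrm{St}}_{\inf} > 0$. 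With $c \in \ell_2$, the upper Riesz bound makes the partial sums Cauchy in $L_2$, so the series converges to some $g \in L_2$; since on every compact set only finitely many hat functions are nonzero, the partial sums also converge pointwise to $f$, forcing $g = f$ almost everywhere. Thus $f$ lies in the closed span, which establishes completeness and completes (ii)$\Rightarrow$(i). The subtle points to watch throughout are the legitimacy of the two summation interchanges, both secured by nonnegativity together with the local finiteness of $\mathcal{V}$, and the reconciliation of the pointwise and $L_2$ limits in the completeness argument.
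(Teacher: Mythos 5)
Your proposal is correct and follows essentially the same route as the paper: the per-simplex two-sided bound from Proposition~\ref{pr:extremeL2simplex}, the Tonelli interchange collapsing $\sum_s \vol{s}\sum_{\M v\in s}|c_{\M v}|^2$ to $\sum_{\M v}|c_{\M v}|^2\vol{\mathrm{St}(\M v)}$, single-vertex test sequences for (i)$\Rightarrow$(iii), and \eqref{fm:L2norm} for (ii)$\Leftrightarrow$(iii). The only difference is that you explicitly verify completeness of the span in $\mathrm{CPWL}(\mathcal{S})\cap L_2(\R^d)$ (via the $\ell_2$-summability of the samples $f(\M v)$ and the reconciliation of the pointwise and $L_2$ limits), a step the paper's proof leaves implicit, so your argument is, if anything, slightly more complete.
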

\begin{proof}
The equivalence $(ii) \Leftrightarrow (iii)$ is a direct consequence of \eqref{fm:L2norm}.

We now show that $(i)\Rightarrow (iii)$. We consider sequences that are zero everywhere, but at one location, and use the Riesz-sequence property to deduce that, for any $\M v\in \mathcal{V}$,
\begin{equation}
  A\leq \|\beta^{\mathcal{S}}_{\M v}\|_{L_2}\leq B,
\end{equation}
where $0\le A\leq B\le + \infty$ are the Riesz bounds.

We now prove $(ii)\Rightarrow(i)$. Let $c \in \ell_2(I)$, $f=\sum_{\M v\in \mathcal{V}}c_{\M v} \beta^{\mathcal{S}}_{\M v}$. We have that
\begin{align}
\|f\|_{L_2}^2 &= \int_{\conv{\mathcal{V}}} |f(\M x)|^2 \dint \M x=\sum_{s\in \mathcal{S}}\int_{s} |f(\M x)|^2 \dint \M x \nonumber\\
&\leq \frac{1}{(d+1)} \sum_{s\in \mathcal{S}} \sum_{\M v\in s\cap\mathcal{V}} \vol{s} |f(\M v)|^2 \nonumber\\
&= \frac{1}{(d+1)}  \sum_{\M v \in \mathcal{V}}\sum_{s \in \mathrm{St}(\M v)}\vol{s} |f(\M v)|^2 \nonumber\\
&= \frac{1}{(d+1)}  \sum_{\M v \in \mathcal{V}}|f(\M v)|^2\sum_{\substack{S \in \mathrm{St}(\M v)}}\vol{s} \nonumber\\
&= \frac{1}{(d+1)}  \sum_{\M v \in \mathcal{V}}\vol{\mathrm{St}(\M v)} |f(\M v)|^2 \nonumber \\
&\leq \frac{\sup_{\M v \in \mathcal{V}}(\vol{\mathrm{St}(\M v)})}{(d+1)}\|c\|_{\ell_2}^2,
\end{align}
where we have applied Proposition \ref{pr:extremeL2simplex} and interchanged the order of the double summation with positive arguments (special case of Tonelli's theorem). Similarly,
\begin{align}
\|f\|_{L_2}^2 &= \int_{\conv{\mathcal{V}}} |f(\M x)|^2 \dint \M x=\sum_{s\in \mathcal{S}}\int_{s} |f(\M x)|^2 \dint \M x \nonumber\\
&\geq \frac{1}{(d+1)(d+2)} \sum_{s\in \mathcal{S}} \sum_{\M v\in s\cap\mathcal{V}} \vol{s} |f(\M v)|^2\nonumber\\
&= \frac{1}{(d+1)(d+2)}  \sum_{\M v \in \mathcal{V}}\sum_{s \in \mathrm{St}(\M v)}\vol{s} |f(\M v)|^2\nonumber\\
&= \frac{1}{(d+1)(d+2)}  \sum_{\M v \in \mathcal{V}}\vol{\mathrm{St}(\M v)} |f(\M v)|^2\nonumber\\
&\geq \frac{\inf_{\M v \in \mathcal{V}}(\vol{\mathrm{St}(\M v)})}{(d+1)(d+2)} \|c\|_{\ell_2}^2.
\end{align}
\end{proof}
Theorem \ref{th:RBIrregular} provides a quantitative way to compare the stability of triangulations. In particular, a triangulation is good when the ratio $\frac{V^{\mathrm{St}}_{\sup}}{V^{\mathrm{St}}_{\inf}}$ is close to $1$, which is an indicator of how uniform the triangulation is.

From Theorem \ref{th:RBIrregular}, we deduce a stronger condition that is sufficient for the Riesz property to hold and that gives further insight into the problem. Since $\vol{\mathrm{St}(\M v)}= \sum_{s\in\mathrm{St}(\M v)} \vol{s}$, we have that
\begin{equation}
  \inf_{s\in \mathcal{S}}\vol{s}\times \inf_{\M v\in \mathcal{V}}|\mathrm{St}(\M v)| \leq V^{\mathrm{St}}_{\inf}\leq V^{\mathrm{St}}_{\sup}\leq\sup_{s\in \mathcal{S}}\vol{s}\times \sup_{\M v\in \mathcal{V}}|\mathrm{St}(\M v)|.
\end{equation}
This means that the hat functions form a Riesz basis whenever the degree of the vertices is upper-bounded and the volume of the simplices is upper- and lower-bounded. This condition, however, is not necessary. 

Theorem \ref{th:RBIrregular} does not contain direct information on the quality of the bounds. Yet, we shall prove in Section \ref{sc:linearbox-splinesStability} that, when the hat functions are shifts of a linear box spline, the given bounds are optimal.
\subsection{Triangulations with Finitely Many Vertices}
For a triangulation $\mathcal{S}$ of a finite set $\mathcal{V}$ of vertices in $d$ dimensions, the hat functions always form a Riesz basis. Indeed, since we assumed that $\mathcal{V}$ cannot be contained in any $(d-1)$-dimensional affine subspace of $\R^d$, all simplices of $\mathcal{S}$ are not degenerated and the conditions of Theorem \ref{th:RBIrregular} are fulfilled. The exact bounds can be computed in accordance with Theorem \ref{th:explicitboundsfinte}.
\begin{theorem}
  \label{th:explicitboundsfinte}
  Let $\mathcal{S}$ be a triangulation of a finite set $\mathcal{V}$ of dimension $d$ of vertices. The corresponding hat functions form a Riesz basis of $\mathrm{CPWL}(\mathcal{S})$ with bounds
  \begin{equation}
    A = \sqrt{\frac{\lambda_{\min}(\M M)}{(d+1)(d+2)}}  \text{   and   } B = \sqrt{\frac{\lambda_{\max}(\M M)}{(d+1)(d+2)}} ,
  \end{equation}
  where the matrix $\M M\inR^{|\mathcal{V}|\times |\mathcal{V}|}$ is symmetric and defined as
  \begin{equation}
    {[\M M]}_{pq}=\begin{cases}
      2\vol{\mathrm{St}(\M v_p)}, & p=q\\
      \vol{\mathrm{St}(\M v_p)\cap \mathrm{St}(\M v_q)}, & \text{otherwise},
    \end{cases}
  \end{equation}
  and where $\lambda_{\min}$ and $\lambda_{\max}$ are the smallest and largest eigenvalues of $\M M$.
\end{theorem}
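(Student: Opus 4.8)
The plan is to identify the Gram matrix of the hat functions and then invoke the standard fact that, for a finite family, the tightest Riesz bounds are the square roots of the extreme eigenvalues of its Gram matrix. Since $\mathcal{V}$ is finite and, by assumption, not contained in any $(d-1)$-dimensional affine subspace, every simplex of $\mathcal{S}$ is nondegenerate; hence $V^{\mathrm{St}}_{\inf}>0$ and $V^{\mathrm{St}}_{\sup}<+\infty$, and Theorem \ref{th:RBIrregular} already guarantees that the hat functions form a Riesz basis of $\mathrm{CPWL}(\mathcal{S})$. It therefore only remains to determine the optimal constants.

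First I would recall the finite-dimensional characterization of the Riesz bounds. Let $\M G$ denote the Gram matrix with entries $[\M G]_{pq}=\dotprod{\beta^{\mathcal{S}}_{\M v_p}}{\beta^{\mathcal{S}}_{\M v_q}}_{\conv{\mathcal{V}}}$. For any $\M c\in\C^{|\mathcal{V}|}$,
\[
\left\|\sum_{\M v\in\mathcal{V}}c_{\M v}\beta^{\mathcal{S}}_{\M v}\right\|_{L_2}^2=\M c^H\M G\M c,
\]
so the Rayleigh quotient $\M c^H\M G\M c/\|\M c\|_2^2$ ranges over $[\lambda_{\min}(\M G),\lambda_{\max}(\M G)]$. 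As the hat functions are linearly independent, $\M G$ is positive definite, so $\lambda_{\min}(\M G)>0$ and the tightest constants in \eqref{eq:RBdf} are $A=\sqrt{\lambda_{\min}(\M G)}$ and $B=\sqrt{\lambda_{\max}(\M G)}$. (Although $\M c$ ranges over complex sequences, $\M G$ is real symmetric and the extreme Rayleigh quotients of a Hermitian matrix coincide with its extreme eigenvalues.) The task thus reduces to showing that $\M G=\frac{1}{(d+1)(d+2)}\M M$.

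Next I would compute the entries of $\M G$ by splitting each inner product over the simplices of $\mathcal{S}$ and applying Lemma \ref{lm:innerprodcutsimplexcomplex} simplex by simplex. On the diagonal, \eqref{fm:L2norm} gives $[\M G]_{pp}=\|\beta^{\mathcal{S}}_{\M v_p}\|_{L_2}^2=\frac{2\vol{\mathrm{St}(\M v_p)}}{(d+1)(d+2)}=\frac{1}{(d+1)(d+2)}[\M M]_{pp}$. For $p\neq q$, only the simplices containing both $\M v_p$ and $\M v_q$, namely those in $\mathrm{St}(\M v_p)\cap\mathrm{St}(\M v_q)$, contribute. On such a simplex $s$, the vector of nodal values of $\beta^{\mathcal{S}}_{\M v_p}$ is a standard basis vector $\M e_i$ (with $i$ the local index of $\M v_p$ in $s$) and that of $\beta^{\mathcal{S}}_{\M v_q}$ is $\M e_j$ with $j\neq i$, so Lemma \ref{lm:innerprodcutsimplexcomplex} yields the contribution $\frac{\vol{s}}{(d+1)(d+2)}\M e_i^T\M P_{d+1}\M e_j=\frac{\vol{s}}{(d+1)(d+2)}$, since every off-diagonal entry of $\M P_{d+1}$ equals $1$. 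Summing over the shared simplices and noting that their total volume is precisely $\vol{\mathrm{St}(\M v_p)\cap\mathrm{St}(\M v_q)}$ — the two stars overlapping with positive measure only on the simplices they have in common, distinct simplices meeting in measure-zero faces — gives $[\M G]_{pq}=\frac{1}{(d+1)(d+2)}[\M M]_{pq}$. Hence $\M G=\frac{1}{(d+1)(d+2)}\M M$, and the claimed values of $A$ and $B$ follow from the eigenvalue characterization above.

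I expect the only delicate point to be the bookkeeping in the off-diagonal computation: correctly identifying the contributing simplices, verifying that the nodal-value vectors collapse the quadratic form $\M e_i^T\M P_{d+1}\M e_j$ to a single off-diagonal entry of $\M P_{d+1}$, and matching the resulting sum of simplex volumes with the geometric quantity $\vol{\mathrm{St}(\M v_p)\cap\mathrm{St}(\M v_q)}$. Once $\M G=\frac{1}{(d+1)(d+2)}\M M$ is established, the remainder is an immediate consequence of the spectral characterization of the Riesz bounds.
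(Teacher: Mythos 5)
Your proposal is correct and follows essentially the same route as the paper: both reduce $\|\sum_{\M v}c_{\M v}\beta^{\mathcal{S}}_{\M v}\|_{L_2}^2$ to the quadratic form $\frac{1}{(d+1)(d+2)}\M c^H\M M\M c$ by splitting over simplices and applying the quadratic-form lemma (the paper organizes this with selection matrices $\M L_s$ and the identity $\M M=\sum_{s}\vol{s}\,\M L_s^T\M P_{d+1}\M L_s$, whereas you compute the Gram matrix entrywise, which is only a difference in bookkeeping), and both then read off the tightest bounds from the extreme eigenvalues. Your explicit remarks on positive definiteness via linear independence and on the measure-zero overlap of distinct simplices are correct refinements that the paper leaves implicit.
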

\begin{proof}
  Let $f=\sum_{\M v\in \mathcal{V}}c_{\M v}\beta^{\mathcal{S}}_{\M v}$. The coefficients $c_{\M v}$ are ordered to form a vector $\M c\inR^{|\mathcal{V}|}$. To each $s\in\mathcal{S}$ we associate a matrix $\M L_s\inR^{(d+1)\times |\mathcal{V}|}$ such that $\M L_s\M c \inR^{d+1}$ contains the coefficients associated with the vertices of $s$. By invoking Lemma \ref{lm:L2simplex}, we obtain
  \begin{align}
    \|f\|_{L_2}^2 &= \int_{\conv{\mathcal{V}}} |f(\M x)|^2 \dint \M x=\sum_{s\in \mathcal{S}}\int_{s} |f(\M x)|^2 \dint \M x \nonumber\\
    &=\frac{1}{(d+1)(d+2)}\sum_{s\in \mathcal{S}} \vol{s}(\M L_s \M c)^H \M P_{d+1} (\M L_s \M c)\nonumber\\
    &= \frac{1}{(d+1)(d+2)} \M c^H \left(\sum_{s\in \mathcal{S}} \vol{s}\M L_s^T \M P_{d+1}\M L_s\right) \M c\nonumber\\
    &= \frac{1}{(d+1)(d+2)} \M c^H \M M \M c,
  \end{align}
where $\M M = \sum_{s\in \mathcal{S}} \vol{s}\M L_s^T \M P_{d+1}\M L_s$. Let $1\leq p,q\leq |\mathcal{V}|$. For $p\neq q$, each entry $(p,q)$ of $\vol{s}\M L_s^T \M P_{d+1}\M L_s$ is given by
\begin{itemize}
  \item $\vol{s}$, if and only if $\M v_p$ and $\M v_q$ are in $s$ or, equivalently, $s\in\mathrm{St}(\M v_p)\cap \mathrm{St}(\M v_q)$;
  \item $0$, otherwise.
\end{itemize}
Now, for $p=q$, each term $\vol{s}\M L_s^T \M P_{d+1}\M L_s$ of the sum has the entry $(p,p)$ be
\begin{itemize}
  \item 2$\vol{s}$, if and only if $\M v_p$ is in $s$ or, equivalently $s\in\mathrm{St}(\M v_p)$;
  \item $0$, otherwise.
\end{itemize}
This shows that $\M M$ is precisely the matrix given in Theorem \ref{th:explicitboundsfinte}.
\end{proof}
For a finite set $\mathcal{V}$ of vertices, one may wonder which triangulation yields the most stable CPWL model in the $L_2$ sense ({\it i.e.}, the smallest $\ell_2\rightarrow L_2$ condition number). The Delaunay triangulation is known to be optimal in several ways ({\it e.g.}, in the plane it maximizes the minimum angle of all the angles of the triangles in the triangulation), but it does not necessarily give the smallest condition number. When the Delaunay triangulation is not unique, the choice can be guided by the related condition number, as detailed in Figure \ref{fig:cn_Delaunay}.

\begin{figure}[h!]
  \begin{minipage}{1.0\linewidth}
    \centering
    \centerline{\includegraphics[width=180mm]{./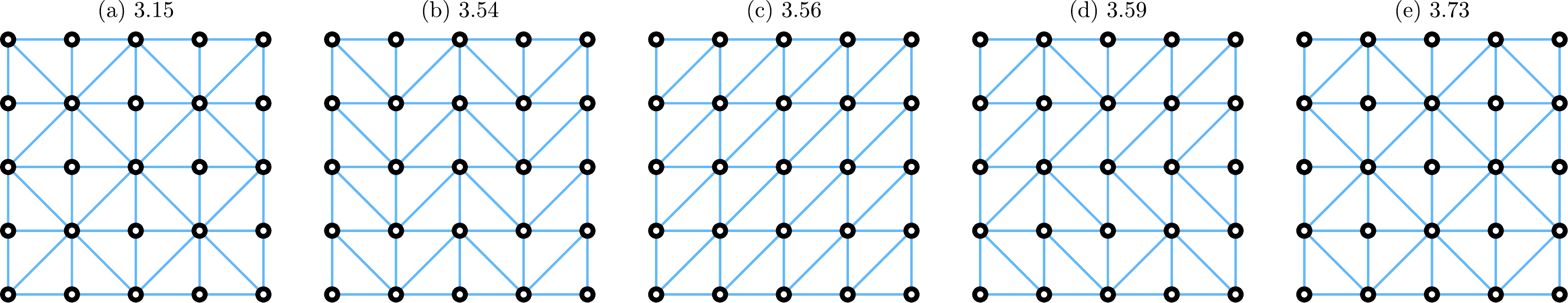}}
    \caption{Condition number of the local parametrization of CPWL functions on various Delaunay triangulations for the same set of vertices. The results stem from Theorem \ref{th:explicitboundsfinte} and were obtained numerically. In a) and e), the pattern is similar but the condition numbers differ significantly. This comes mainly from the behavior on the border since the smallest star has two simplices for a) while it has a single one for e).}
    \label{fig:cn_Delaunay}\medskip
  \end{minipage}
  \end{figure}
\section{Uniform Setting}
\label{sc:linearbox-splinesStability}
Throughout this section, we assume that the vertices coincide with the sites of a lattice $\Lambda$. This is natural in some applications, such as image processing \cite{kimBoxSplineReconstruction2008}, but can also be a sensible choice in low dimensional learning problems \cite{Campos2021}. While a uniform grid constrains the model and thereby reduces its expressivity, it significantly improves the computational performance. In this setting, the parametrization of CPWL functions is naturally handled if one chooses shifts of a single linear box spline $B$ for the hat basis functions. The generated space $\mathrm{CPWL}(\mathcal{S})= \{\sum_{\M k \in \Lambda}c_{\M k}B(\cdot - \M k)\colon c_{\M k} \in \mathbb{C}\}$ is now shift-invariant and lends itself to the powerful tools of Fourier analysis \cite{DeBoor1994}. Note that, for any $\M x\inR^d$, the sum $\sum_{\M k \in \Lambda}c_{\M k}B(\M x - \M k)$ has at most $(d+1)$ nonzero arguments. This follows from the short support of linear box splines (we provide more details on the properties of linear box splines in Section \ref{subsec:boxsplines}).

\subsection{Linear Box Splines}
\label{subsec:boxsplines}
Box splines of any degree have been extensively studied. We refer the reader to the book by de Boor {\it et al.}  \cite{de1993box} for a general theory and a more comprehensive account. In this paper, we shall concentrate solely on linear box splines, which will in return allow us to derive precise specific results.

Consider a matrix $\V \Xi = [\V \xi_1\cdots\V \xi_d]\inR^{d\times d}$, where $(\V \xi_1,\ldots,\V \xi_d)$ is a collection of linearly independent vectors of $\R^d$. The matrix $\V \Xi$ generates a lattice of $\R^d$ whose sites are $\V \Xi \Z^d = \{\V \Xi \M k\colon \M k\inZ^d\}$. Moreover, let $\V \Xi_{d+1} =[\V \xi_1 \ldots\V \xi_{d+1}]\inR^{d\times(d+1)}$ with $\V \xi_{d+1}=\sum_{k=1}^d \V \xi_k$. The linear box spline $B_{\V \Xi_{d+1}}\colon\R^d\rightarrow\R$ generated by the collection of vectors $(\V \xi_1,\cdots,\V \xi_{d+1})$ is best defined its Fourier transform
\begin{equation}
  \label{eq:fourierlinearboxspline}
  \widehat{B}_{\V \Xi_{d+1}}(\V \omega) = |\det \V \Xi| \prod_{k=1}^{d+1} \frac{1-\ee^{-\ii \V \xi_{k}^T\V \omega}}{\ii\V \xi_{k}^T\V \omega}.
\end{equation}
The normalization factor $|\det \V \Xi|$ ensures consistency with our definition of a hat function, but is often not included in the literature. The fact that $B_{\V \Xi_{d+1}}$ is a CPWL function is made explicit with Proposition \ref{pr:greenexpansion}.
\begin{proposition}
  \label{pr:greenexpansion}
  \begin{equation}
    B_{\V \Xi_{d+1}}(\M x) =\sum_{\V \epsilon\in\{0,1\}^{d+1}}(-1)^{|\V\epsilon|} \min\left(\V \Xi^{-1}(\M x - \V \Xi_{d+1}\V \epsilon)\right)_+.
  \end{equation}
\end{proposition}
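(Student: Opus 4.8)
\emph{Strategy.} The plan is to read the right-hand side as a complete finite difference of a single Green's function and then to match Fourier transforms with \eqref{eq:fourierlinearboxspline}. Set $g(\M x)\coloneqq\bigl(\min(\V\Xi^{-1}\M x)\bigr)_+$, where $\min(\M y)=\min_{1\le k\le d}y_k$ and $(\cdot)_+=\max(\cdot,0)$, so that the claim reads $B_{\V\Xi_{d+1}}=F$ with $F(\M x)\coloneqq\sum_{\V\epsilon\in\{0,1\}^{d+1}}(-1)^{\abs{\V\epsilon}}g(\M x-\V\Xi_{d+1}\V\epsilon)$. Because $\sum_{k}\epsilon_k\V\xi_k=\V\Xi_{d+1}\V\epsilon$, the Fourier symbol of the shift combination $g\mapsto\sum_{\V\epsilon}(-1)^{\abs{\V\epsilon}}g(\cdot-\V\Xi_{d+1}\V\epsilon)$ is exactly the numerator $\prod_{k=1}^{d+1}(1-\ee^{-\ii\V\xi_k^T\V\omega})$ of \eqref{eq:fourierlinearboxspline}, so the whole proof reduces to identifying $g$ with the (distributional) inverse transform of $\abs{\det\V\Xi}/\prod_{k=1}^{d+1}(\ii\V\xi_k^T\V\omega)$. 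To lighten the computation I would first reduce to $\V\Xi=\M I$ via the change of variables $\M x=\V\Xi\M y$: using $\V\Xi^{-1}\V\Xi_{d+1}=[\,\M e_1\ \cdots\ \M e_d\ \M 1\,]$ and $\V\xi_k^T\V\omega=(\V\Xi^T\V\omega)_k$, both \eqref{eq:fourierlinearboxspline} and the claimed formula are covariant under this map, so it suffices to treat the standard directions $\M e_1,\dots,\M e_d,\M 1$ with $g$ replaced by $\phi(\M y)\coloneqq(\min_k y_k)_+$.

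\emph{Technical heart.} The core is to show that $\phi$ is a fundamental solution of the product of directional derivatives $L_0\coloneqq\partial_1\cdots\partial_d\,(\partial_1+\cdots+\partial_d)$, i.e.\ that $L_0\phi=\delta$ in $\mathcal D'(\R^d)$. I would establish this by induction on $d$, differentiating one coordinate at a time. Since $\phi$ is continuous and piecewise affine, each first derivative carries no singular part, and $\partial_k\phi=\mathbb 1_{\{\,y_k=\min_j y_j>0\,\}}$ is the indicator of the full-dimensional cone on which $y_k$ is the positive minimizer; applying $\partial_2,\dots,\partial_d$ then raises the codimension by one at each step, collapsing the support onto the open diagonal ray $\{t\M 1:t>0\}$, and the final derivative $\partial_{\M 1}=\partial_1+\cdots+\partial_d$ along that ray deposits the unit mass at the origin. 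I expect this bookkeeping of collapsing supports---together with checking that no spurious singular contribution appears on the interfaces where the minimizer switches---to be the main obstacle; the cases $d=1$ (where $L_0\phi=(y)_+''=\delta$) and $d=2$ provide the base case and the template. Transporting $L_0\phi=\delta$ back through $\M x=\V\Xi\M y$ yields the general identity $\bigl(\prod_{k=1}^{d+1}\V\xi_k^T\nabla\bigr)g=\abs{\det\V\Xi}\,\delta$. Separately, and more easily, I would verify that $F$ is continuous with compact support: outside a bounded neighbourhood of the origin all $2^{d+1}$ shifted copies $g(\cdot-\V\Xi_{d+1}\V\epsilon)$ coincide with one common affine branch of $g$, and a complete first difference taken along $d+1\ge2$ directions annihilates every polynomial of degree $\le1$.

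\emph{Conclusion by Fourier matching.} Since $L\coloneqq\prod_{k=1}^{d+1}\V\xi_k^T\nabla$ commutes with translations, the previous step gives $L F=\abs{\det\V\Xi}\sum_{\V\epsilon}(-1)^{\abs{\V\epsilon}}\delta_{\V\Xi_{d+1}\V\epsilon}$, whose Fourier transform is $\abs{\det\V\Xi}\prod_{k=1}^{d+1}(1-\ee^{-\ii\V\xi_k^T\V\omega})$. The symbol of $L$ is $\prod_{k=1}^{d+1}(\ii\V\xi_k^T\V\omega)$, and by \eqref{eq:fourierlinearboxspline} this same symbol times $\widehat B_{\V\Xi_{d+1}}$ equals the identical product; hence $\prod_{k=1}^{d+1}(\ii\V\xi_k^T\V\omega)\bigl(\widehat F-\widehat B_{\V\Xi_{d+1}}\bigr)=0$. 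Because $F$ is compactly supported, $\widehat F$ is continuous (indeed entire, by Paley--Wiener), while $\widehat B_{\V\Xi_{d+1}}$ is manifestly continuous; as the prefactor vanishes only on the measure-zero union of hyperplanes $\{\V\xi_k^T\V\omega=0\}$, the two continuous functions agree on a dense set and therefore everywhere. This yields $F=B_{\V\Xi_{d+1}}$, which is the claimed formula.
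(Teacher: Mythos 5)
Your route is genuinely different from the paper's, and its core idea is sound. The paper never differentiates anything: it expands the numerator of \eqref{eq:fourierlinearboxspline} into the alternating sum of exponentials, computes the distributional Fourier transform of $\min(\V\Xi^{-1}\M x)_+$ explicitly (via the convolution factorization $H*(W\circ\M A_d)$ of Appendix \ref{ap:green}, which produces the factors $\tfrac{1}{\ii\V\xi_k^T\V\omega}+\pi\delta(\V\xi_k^T\V\omega)$), observes that the $\pi\delta$ corrections are annihilated by $(1-\ee^{-\ii\V\xi_k^T\V\omega})$, and inverts term by term---so it never needs to know that the alternating sum is compactly supported. You instead show that $\min(\cdot)_+$ is a fundamental solution of $L=\prod_{k=1}^{d+1}\V\xi_k^T\nabla$, apply $L$ to the finite-difference combination $F$, and match Fourier transforms. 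Your fundamental-solution computation (first derivatives are indicators of cones, successive derivatives collapse the support onto the diagonal ray, the final directional derivative deposits the unit mass) is correct---I checked the bookkeeping in low dimensions and the induction goes through---and it is the real-domain counterpart of Lemma \ref{lm:fourierminplus}; what it buys is a proof that avoids manipulating products of Dirac distributions in the frequency domain.

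The gap is in the step you describe as the easier one: compact support of $F$. Your justification---that outside a bounded neighbourhood of the origin all $2^{d+1}$ shifted copies of $g$ lie on one common affine branch---is false for $d\ge2$. The affine pieces of $g=\min(\V\Xi^{-1}\cdot)_+$ are cones with apex at the origin, so the walls between them (where two coordinates of $\V\Xi^{-1}\M x$ tie for the minimum) are unbounded, and the shifted walls are parallel but distinct; every unbounded neighbourhood of such a wall contains points at which different shifts of $g$ follow different affine formulas. (The alternating sum does still vanish there, but through a cancellation across two distinct branches that your argument does not see; e.g.\ for $d=2$, $\V\Xi=\M I$ and $\M x=(t,t+\tfrac12)$ with $t$ large, the eight terms split into two groups following the branches $y_1$ and $y_2$, each group summing to $\pm1$.) This is not cosmetic: your final step genuinely requires $\widehat F$ to be a continuous function, because $\prod_k(\ii\V\xi_k^T\V\omega)\,(\widehat F-\widehat B_{\V\Xi_{d+1}})=0$ for a general tempered distribution only forces the difference to be supported on the hyperplanes $\{\V\xi_k^T\V\omega=0\}$, where a nonzero singular layer could survive. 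The repair is latent in your own computation: your intermediate steps show that $\V\xi_k^T\nabla$ applied to $g$ yields the truncated power over the remaining directions, so each backward difference can be written as $\nabla_{\V\xi_k}u=\int_0^1(\V\xi_k^T\nabla u)(\cdot-t\V\xi_k)\dint t$, and iterating over the $d+1$ directions gives $F=|\det\V\Xi|\int_{[0,1]^{d+1}}\delta(\cdot-\V\Xi_{d+1}\M t)\dint\M t$, i.e.\ $F$ is the pushforward of Lebesgue measure on the unit cube and is supported in the zonotope $\V\Xi_{d+1}[0,1]^{d+1}$. With that established, your Paley--Wiener and density argument closes the proof.
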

\begin{proof}
The product in \eqref{eq:fourierlinearboxspline} can be expanded as
\begin{equation}
  \label{eq:greenexpansionfourier}
  \widehat{B}_{\V \Xi_{d+1}}(\V \omega) = |\det \V \Xi| \left (\prod_{k=1}^{d+1} \frac{1}{\ii\V \xi_{k}^T\V \omega}\right )\times \sum_{\V \epsilon\in\{0,1\}^{d+1}}(-1)^{|\V\epsilon|}\ee^{-\ii (\V \Xi_{d+1} \V \epsilon)^T \V \omega}.
\end{equation}
By invoking Lemma \ref{lm:fourierminplus} (Appendix \ref{ap:green}) and making use of the general Fourier stretch theorem, we get that
\begin{equation}
  \min(\V \Xi^{-1}\M x)_+ \stackrel{\mathcal{F}}{\mapsto}|\det \V \Xi| \prod_{k=1}^{d+1} \left (\frac{1}{\ii\V \xi_{k}^T\V \omega} + \pi \delta(\V \xi_{k}^T\V \omega)\right ),
\end{equation}
where $\M x$ is the space variable, $\V \omega$ is the pulsation variable and $\delta$ is the Dirac distribution. Knowing that $(1-\ee^{-\ii \V \xi_{k}^T\V \omega})\delta(\V \xi_{k}^T\V \omega)=0$, we observe that \eqref{eq:fourierlinearboxspline} has the equivalent form
\begin{equation}
  \label{eq:toindefourier}
  \widehat{B}_{\V \Xi_{d+1}}(\V \omega) = |\det \V \Xi | \left (\prod_{k=1}^{d+1} \frac{1}{\ii\V \xi_{k}^T\V \omega}+ \pi \delta(\V \xi_{k}^T\V \omega)\right ) \sum_{\V \epsilon\in\{0,1\}^{d+1}}(-1)^{|\V\epsilon|}\ee^{-\ii \V \epsilon^T \V \Xi_{d+1}^T \V \omega}.
\end{equation}
We conclude by taking the inverse Fourier transform on both sides of \eqref{eq:toindefourier}.
\end{proof}
Proposition \ref{pr:greenexpansion} is illustrated in dimension $d=2$ in Figure \ref{fig:GHH_to_Hat}. This expansion gives a way to prove that the GHH model \cite{Wang2005} can represent any linear box spline. More interestingly, it provides a precise relation between the local and nonlocal representations, with explicit generalized hinging hyperplanes (namely, the shifts of a single generalized hinging hyperplane).
Our result is close to \cite{condatThreedirectionalBoxsplinesCharacterization2006}, where a similar formula is proven for three directional box splines of any degree, but only in dimension $d=2$. In any dimension and for box splines of any degree, a comparable decomposition is given in \cite{Horacsek2018EvaluatingBS}. Nonetheless, when applied to linear box splines, the expansion in \cite{Horacsek2018EvaluatingBS} is made of discontinuous Green functions and is less compact.
\begin{figure}[h!]
  \begin{minipage}{1.0\linewidth}
    \centering
    \centerline{\includegraphics[width=120mm]{./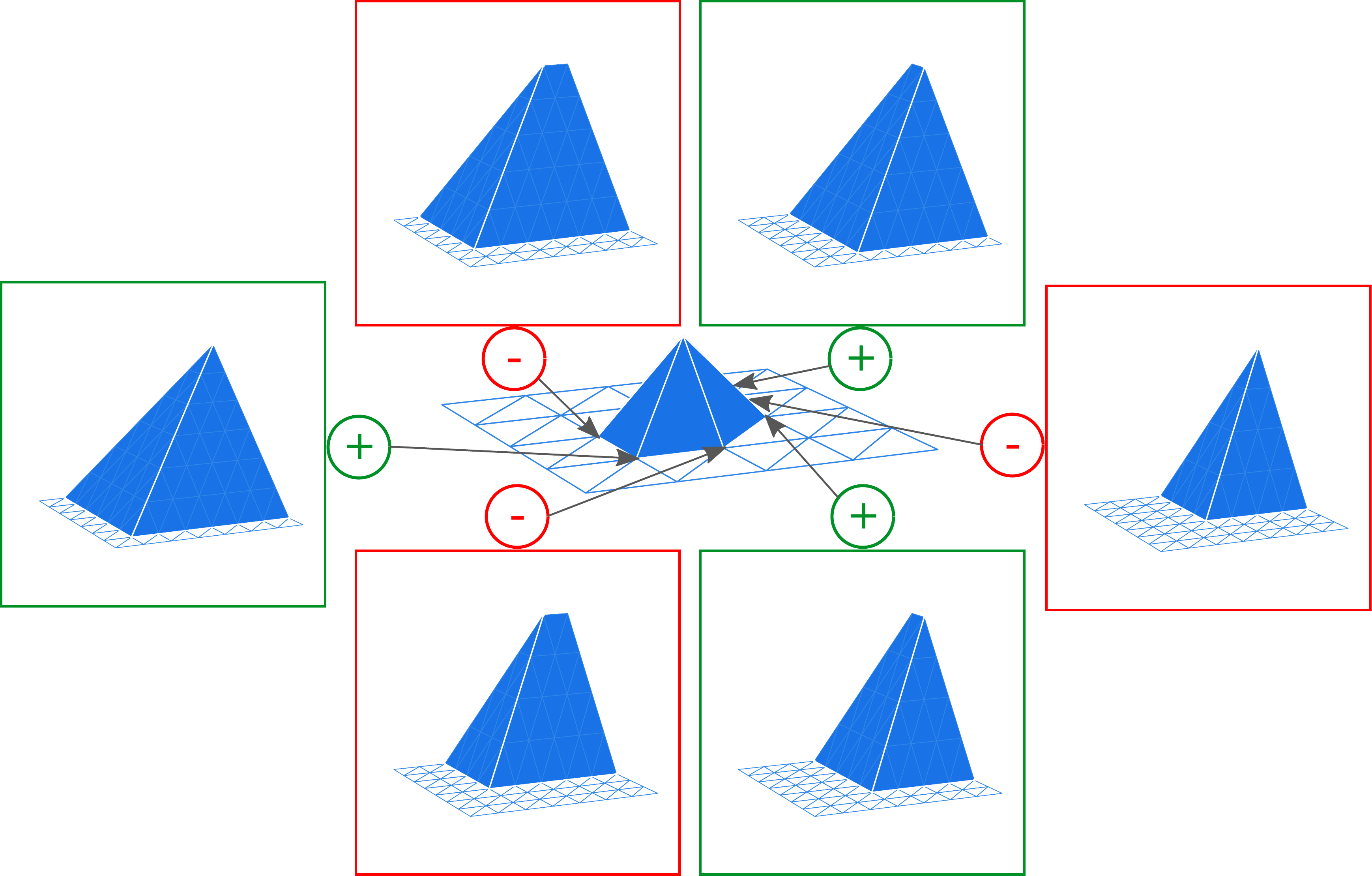}}
    \caption{Linear decomposition of the 2D linear box spline in translates of the causal hinge functions $(x,y)\mapsto \min(x,y)_+$.}
    \label{fig:GHH_to_Hat}\medskip
  \end{minipage}
  \end{figure}

The key properties of the linear box spline $B_{\V \Xi_{d+1}}$ are
\begin{itemize}
  \item continuity and piecewise linearity (obvious from Proposition \ref{pr:greenexpansion} since $\M x\mapsto \min(\M x)_+$ is CPWL);
  \item compact support since the support is the Minkowski sum of $(\V \xi_1,\ldots,\V \xi_{d+1})$, which is a zonotope that is symmetric with respect to its center $\V \xi_{d+1}=\sum_{k=1}^d\V \xi_k$;
  \item $B_{\V \Xi_{d+1}}(\V \xi_{d+1}) = 1$ and $B_{\V \Xi_{d+1}}(\V \Xi \M k) = 0$ for $\M k \in \Z^d\backslash\{\V 1\}$;
  \item approximation power of order $2$ \cite{de1993box}, which means that the reconstruction error of sufficiently smooth decaying functions decreases with the square of the grid size.
\end{itemize}

\subsection{Derivation of the Exact Riesz-Basis Bounds}
Prior to giving the general Riesz-basis bounds of linear box splines (Theorem \ref{th:boxsplinesRB}), we present a simple but powerful result.
  \begin{proposition}
    \label{prop:RBInvarianceLattice}
    Let $(\V \xi_1,\cdots,\V \xi_{d})$ be a family of linearly independent vectors of $\R^d$, $\V \xi_{d+1}=\sum_{k=1}^d \V \xi_k$, $\V \Xi_{d+1} = [\V \xi_1\cdots\V \xi_{d+1}]$ and $\M U\inR^{d\times d}$ an invertible matrix. Then, the Riesz bounds of $B_{\M U\V \Xi_{d+1}}$ are the ones of $B_{\V \Xi_{d+1}}$ scaled by $\sqrt{|\det{(\M U)}|}$.
  \end{proposition}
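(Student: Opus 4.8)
The plan is to reduce the Riesz-basis analysis of $B_{\M U\V \Xi_{d+1}}$ on the lattice $\M U\V \Xi \Z^d$ to that of $B_{\V \Xi_{d+1}}$ on the lattice $\V \Xi \Z^d$ by recognizing the former generator as a linear dilation of the latter. First I would check that $\M U\V \Xi_{d+1}$ is again an admissible direction matrix for a linear box spline: since $\M U$ is linear, $\M U\V \xi_{d+1} = \M U\sum_{k=1}^d \V \xi_k = \sum_{k=1}^d \M U\V \xi_k$, so the defining constraint that the last direction be the sum of the first $d$ is preserved, and $(\M U\V \xi_1,\ldots,\M U\V \xi_d)$ remains linearly independent because $\M U$ is invertible.

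Next I would pass through the Fourier domain. Using the explicit expression \eqref{eq:fourierlinearboxspline} and the identity $(\M U\V \xi_k)^T\V \omega = \V \xi_k^T(\M U^T\V \omega)$, a direct substitution gives
\begin{equation}
  \widehat{B}_{\M U\V \Xi_{d+1}}(\V \omega) = |\det \M U|\, \widehat{B}_{\V \Xi_{d+1}}(\M U^T\V \omega).
\end{equation}
By the general Fourier stretch theorem (the same tool already invoked in the proof of Proposition \ref{pr:greenexpansion}), this relation inverts in the space domain to the clean dilation identity $B_{\M U\V \Xi_{d+1}}(\M x) = B_{\V \Xi_{d+1}}(\M U^{-1}\M x)$, which I would record as the central step.

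The dilation identity transfers directly to shifted sums. For any $\M k\in\Z^d$ one has $B_{\M U\V \Xi_{d+1}}(\M x - \M U\V \Xi \M k) = B_{\V \Xi_{d+1}}(\M U^{-1}\M x - \V \Xi \M k)$, so for $c\in\ell_2(\Z^d)$, writing $f(\M y) = \sum_{\M k\in\Z^d} c_{\M k} B_{\V \Xi_{d+1}}(\M y - \V \Xi \M k)$, the function synthesized on the transformed lattice is exactly $f(\M U^{-1}\cdot)$. A change of variable $\M y = \M U^{-1}\M x$ in the $L_2$ integral then yields
\begin{equation}
  \Big\| \sum_{\M k\in\Z^d} c_{\M k} B_{\M U\V \Xi_{d+1}}(\cdot - \M U\V \Xi \M k) \Big\|_{L_2} = \sqrt{|\det \M U|}\, \|f\|_{L_2}.
\end{equation}
Since this holds for every $c\in\ell_2(\Z^d)$ while $\|c\|_{\ell_2}$ is unchanged, the two bracketing inequalities defining the Riesz bounds of $B_{\V \Xi_{d+1}}$ transfer verbatim to $B_{\M U\V \Xi_{d+1}}$ after multiplication by $\sqrt{|\det \M U|}$, giving the claimed scaling of both bounds.

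I do not anticipate a genuine obstacle here: the argument is structurally short. The only point demanding care is the bookkeeping of the Jacobian and determinant factors, namely ensuring that the $|\det \M U|$ arising from the Fourier normalization, from the stretch theorem, and from the $L_2$ change of variable combine to leave precisely one factor $\sqrt{|\det \M U|}$ on the bounds rather than a stray power. Verifying these constants against the convention of \eqref{eq:fourierlinearboxspline} is the one place where a slip would propagate through the whole estimate.
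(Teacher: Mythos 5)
Your proof is correct and follows essentially the same route as the paper: establish the dilation identity $B_{\M U\V \Xi_{d+1}} = B_{\V \Xi_{d+1}}\circ \M U^{-1}$ and then track the single Jacobian factor $|\det \M U|$ through the $L_2$ change of variable while $\|c\|_{\ell_2}$ is unchanged. The only cosmetic difference is that you derive the dilation identity from the Fourier formula \eqref{eq:fourierlinearboxspline} and the stretch theorem, whereas the paper reads it off the spatial expansion of Proposition \ref{pr:greenexpansion}; both are valid one-line observations.
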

  \begin{proof}
  From Proposition \ref{pr:greenexpansion}, we infer that $B_{\M U \V \Xi_{d+1}} = B_{\V \Xi_{d+1}}\circ \M U^{-1}$. (The generalized version of this relation $M_{\M U \V \Xi } = \abs{\det \M U^{-1}} M_{\V \Xi } \circ \M U^{-1}$ is given in \cite{de1993box} and holds true for unnormalized box splines of any degree.) In addition, for any $f \in L_2(\R^d)$, a change of variable allows one to show that $\|f\circ \M U^{-1}\|_{L_2}^2 =  \abs{\det(\M U)}\|f\|_{L_2}^2$, which means that, for $c\in \ell_2(\mathbb{Z})$, it holds that
  \begin{equation}
  \|\sum_{\M k\inZ^d}c_{\M k} B_{\M U \V \Xi_{d+1}}(\cdot-\M k)\|_{L_2}^2 = \| ((\sum_{\M k\inZ^d}c_{\M k} B_{\V \Xi_{d+1}})\circ \M U^{-1})(\cdot-\M k)\|_{L_2}^2 = \abs{\det(\M U)}\|\sum_{\M k\inZ^d}c_{\M k} B_{\V \Xi_{d+1}}(\cdot-\M k)\|_{L_2}^2,
  \end{equation}
 which allows us to conclude.
  \end{proof}

\begin{theorem}
  \label{th:boxsplinesRB}
  Let $(\V \xi_1,\ldots,\V \xi_{d})$ be a family of $d$ free vectors of $\R^d$, $\V \xi_{d+1}=\sum_{k=1}^d \V \xi_k$, and $\V \Xi_{d+1} = [\V \xi_1\cdots\V \xi_{d+1}]$. Then, the collection $\{B_{\V \Xi_{d+1}}(\cdot-\V \Xi \M k)\colon \M k \in \Z^d\}$ of linear box splines forms a Riesz basis with bounds
  \begin{equation}
    A = \sqrt{\frac{|\det \V \Xi|}{(d+2)}} \;\text{  and  }\; B = \sqrt{|\det \V \Xi|}.
  \end{equation}
The associated Riesz condition number is $\sqrt{d+2}$.
  \end{theorem}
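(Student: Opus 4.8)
The plan is to reduce the statement to the canonical lattice $\Z^d$ and then to show that the valid bounds supplied by Theorem~\ref{th:RBIrregular} cannot be improved. Writing $\M E_{d+1}=[\M e_1\cdots\M e_d\,\V 1]$ for the canonical generator (the case $\V\Xi=\M I_d$, with $\M e_k$ the standard basis vectors and $\V 1=(1,\dots,1)$), Proposition~\ref{prop:RBInvarianceLattice} applied with $\M U=\V\Xi$ gives $B_{\V\Xi_{d+1}}=B_{\M E_{d+1}}\circ\V\Xi^{-1}$, so that the Riesz bounds of $B_{\V\Xi_{d+1}}$ are those of the canonical box spline $B_{\M E_{d+1}}$ multiplied by $\sqrt{|\det\V\Xi|}$. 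Hence it suffices to prove that the exact bounds on $\Z^d$ are $A=1/\sqrt{d+2}$ and $B=1$; this already yields the claimed scaling and the condition number $\sqrt{d+2}$.

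For the validity of these bounds I would identify the one geometric quantity that Theorem~\ref{th:RBIrregular} needs. The canonical box spline is the hat function, centered at $\V 1$, of the Kuhn (Freudenthal) triangulation of $\Z^d$, and its shifts are precisely the hat functions of that triangulation; by shift-invariance every star has the same volume. That volume is $d+1$, as one sees either directly (it is the volume of the zonotope support) or from Proposition~\ref{pr:Lpnorm} together with $\|B_{\M E_{d+1}}\|_{L_1}=\widehat B_{\M E_{d+1}}(\V 0)=1$, which gives $\vol{\mathrm{St}}=(d+1)\|B_{\M E_{d+1}}\|_{L_1}=d+1$. Plugging $V^{\mathrm{St}}_{\inf}=V^{\mathrm{St}}_{\sup}=d+1$ into Theorem~\ref{th:RBIrregular} shows that the shifts form a Riesz basis with the valid bounds $A=1/\sqrt{d+2}$, $B=1$ and condition number at most $\sqrt{d+2}$. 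It remains to saturate each bound.

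The upper bound is reached by a locally constant sequence: taking $c_{\M k}=1$ on a large box of $N$ lattice points and $0$ elsewhere, the partition-of-unity property forces $f=\sum_{\M k}c_{\M k}B_{\M E_{d+1}}(\cdot-\M k)\equiv 1$ on the interior, so $\|f\|_{L_2}^2=N+O(N^{(d-1)/d})$ while $\|c\|_{\ell_2}^2=N$ and the ratio tends to $1$. The lower bound is the crux. I would saturate it with the oscillating sequence $c_{\M k}=\zeta_{d+1}^{\V 1^T\M k}$, where $\zeta_{d+1}=\ee^{\ii 2\pi/(d+1)}$ is a primitive $(d+1)$-th root of unity. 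The key observation is that every simplex of the Kuhn triangulation is a monotone lattice path from some $\M m$ to $\M m+\V 1$, whose $d+1$ vertices therefore carry the values $\zeta_{d+1}^{\V 1^T\M m}(1,\zeta_{d+1},\dots,\zeta_{d+1}^{d})$, with vanishing sum because $\sum_{j=0}^{d}\zeta_{d+1}^{j}=0$. Thus $\M f_s\perp\V 1$ on every simplex, i.e.\ $\M f_s$ lies in the eigenspace of $\M P_{d+1}$ for the eigenvalue $1$, so the lower inequality of Proposition~\ref{pr:extremeL2simplex} becomes an equality simplex by simplex. Truncating this sequence to a box of $N$ points yields $\|f\|_{L_2}^2=N/(d+2)+O(N^{(d-1)/d})$ against $\|c\|_{\ell_2}^2=N$, so the ratio tends to $1/(d+2)$ and $A=1/\sqrt{d+2}$ is optimal; hence $B/A=\sqrt{d+2}$ exactly.

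I expect this last step to be the main obstacle. The algebraic insight---that a primitive $(d+1)$-th root of unity evaluated on coordinate sums annihilates every monotone simplex---is what forces the lower bound to be tight and is special to the linear box spline. The only technical care needed is that truncation breaks the cancellation on the $O(N^{(d-1)/d})$ simplices that meet the boundary of the box; since each of these contributes a bounded amount, the corrections are of lower order and do not affect either limit. Equivalently, the whole tightness argument can be phrased through the Fourier criterion~\eqref{eq:RBFourier}: the two trial sequences correspond to the frequencies $\V\omega=\V 0$ and $\V\omega=\tfrac{2\pi}{d+1}\V 1$, at which the periodized energy $\widehat g$ attains $\esssup\widehat g=1$ and $\essinf\widehat g=1/(d+2)$, respectively.
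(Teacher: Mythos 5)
Your proof is correct and follows essentially the same route as the paper: reduction to the Cartesian lattice via Proposition~\ref{prop:RBInvarianceLattice}, the identity $\vol{\mathrm{St}(D)}=d+1$, the a priori bounds from Theorem~\ref{th:RBIrregular}, and tightness via the $(d+1)$-th root-of-unity modulation whose values on the vertices of each Kuhn simplex sum to zero. The only difference is presentational: you saturate the two bounds with truncated test sequences in the space domain, whereas the paper evaluates the Fourier symbol $\widehat g$ at $\V \omega=\V 0$ and $\V \omega=-\frac{2\pi}{d+1}\V 1$ --- the equivalence you yourself point out at the end.
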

  \begin{proof}
  Following Proposition \ref{prop:RBInvarianceLattice}, we focus solely on the cartesian lattice and denote the Cartesian linear box spline by $D$. In this case, it is known that the simplices form a Kuhn/Freudenthal triangulation \cite{Kim2010}. The vertices of any simplex $s = \mathrm{conv}(\M v_0,\ldots,\M v_d)$ of the triangulation take the form
  \begin{equation}
    \label{eq:kuhn}
  \M v_k = \M v_0 + \sum_{p = 1}^k \M e_{\V\sigma(p)},
  \end{equation}
  where $\V\sigma$ is a permutation of the set $\{0,\ldots,d\}$ and $k=1,\ldots,d$. Correspondingly, for $p=0,\ldots, d$, there is a unique vertex $\M v_{k_p}$ in each simplex  such that $\abs{\M v_{k_p}} \equiv p \mod (d+1)$. Indeed, \eqref{eq:kuhn} yields that $|\M v_k|= |\M v_0| + k$. Then, we use the Fourier characterization \eqref{eq:RBFourier} to find the Riesz bounds. Specifically, we have to find the essential extrema of
  \begin{align}
    \widehat g\colon \V \omega \mapsto \sum_{\M k \inZ^d}\dotprod{D}{D(\cdot - \M k)}\ee^{-\ii \V \omega^T \M k}.
  \end{align}
Since the basis function $D$ is compactly supported, the sum is finite and the function $\widehat g\colon\R^d\rightarrow \C$ is continuous and $2\pi$-periodic with respect to each coordinate. We can therefore simply look for the maximum and the minimum of $\widehat g$ in the hypercube $[0,2\pi)^d$. We apply the triangular inequality, use the non negativity of $D$ and invoke the partition of unity of the linear box spline to conclude that the maximum of $\widehat g$ is attained for $\V \omega = \V 0$. With \eqref{eq:L1norm} we find the upper bound
  $B^2 = \frac{\vol{\mathrm{St}(D)}}{d+1}$, where $\mathrm{St}(D)$ is the support of $D$ or, equivalently, the star of the vertex located at $\V 1$. Now, for the minimum, we evaluate $\widehat g$ at $\V \omega_0 = (-\frac{2\pi}{d+1}\V 1)$ and get
  \begin{align}
  \sum_{\M k \inZ^d}\dotprod{D}{D(\cdot - \M k)}\ee^{-\ii \V {\omega_0}^T \M k}&=\dotprod{D}{\sum_{\M k \inZ^d}\zeta_{d+1}^{|\M k|}{D(\cdot - \M k)}}\nonumber\\
  &= \sum_{s\in \mathrm{St}(D)}\dotprod{D}{\sum_{\M k \inZ^d}\zeta_{d+1}^{|\M k|} D(\cdot - \M k)}_s\nonumber\\
  &= \sum_{s\in \mathrm{St}(D)}\dotprod{D}{\sum_{(\M k -\V 1)\inZ^d\cap s}\zeta_{d+1}^{|\M k|} D(\cdot - \M k)}_s,
  \end{align}
  where $\dotprod{f}{h}_s = \int_{\M x\in s}\overline{f(\M x)}h(\M x)\dint \M x$. On one hand, we apply Lemma \ref{lm:innerprodcutsimplexcomplex} and use the inherent structure of the Kuhn triangulation displayed in \eqref{eq:kuhn} to deduce that
  \begin{align}
  \min_{\V \omega\in[0,2\pi]^d}\widehat g(\V \omega)\leq \widehat g(\V \omega^*)&=\sum_{s\in St_{D}} \frac{1}{(d+1)(d+2)}\vol{s} \begin{bmatrix}
  1 &0& \ldots &0
  \end{bmatrix}  \begin{bmatrix}
  2 &1 &\ldots&1\\
  1 &\ddots & \ddots & \vdots\\
  \vdots& \ddots & \ddots & 1\\
  1 & \cdots & 1 & 2
  \end{bmatrix}   \begin{bmatrix}
  1 \\ \zeta_{d+1} \\\vdots \\ \zeta_{d+1} ^d
  \end{bmatrix}\nonumber\\
  &= \frac{\vol{\mathrm{St}(D)}}{(d+1)(d+2)}.
  \end{align}
  On the other hand, Theorem \ref{th:RBIrregular} implies that $\min_{\V \omega\in[0,2\pi]^d}\widehat g(\V \omega)\geq \frac{\vol{\mathrm{St}(D)}}{(d+1)(d+2)}$, from which we infer that $A^2=\frac{\vol{\mathrm{St}(D)}}{(d+1)(d+2)}$.
  Moreover, we have that $\int_{\R^d}D(\M x)\dint \M x = \widehat{D}(\V 0) = \det {(\M I)}= 1$ (from \eqref{eq:fourierlinearboxspline}) and, since the box spline $D$ is nonnegative, $\int_{\R^d}D(\M x)\dint \M x=\|D\|_{L_1}=\frac{\vol{\mathrm{St}(D)}}{(d+1)}$ (from \eqref{eq:L1norm}). In short, $\vol{\mathrm{St}(D)}=(d+1)$, which allows us to derive the result for the Cartesian lattice. The extension to any lattice then follows from Proposition \ref{prop:RBInvarianceLattice}.
  
  \end{proof}
  
  Theorem \ref{th:RBIrregular} and \ref{th:boxsplinesRB} yield the same bounds for linear box splines, which confirms the good quality of the bounds provided for irregular triangulations.
  
  In the proof of Theorem \ref{th:boxsplinesRB} we showed, on one hand, that the volume of the star of a vertex in the Kuhn triangulation is $(d+1)$. On the other hand, the volume of the simplices of this triangulation can be readily computed and amounts to $\frac{1}{d!}$. We deduce that the linear box spline is made of $(d+1)!$ nonzero affine pieces.

  From Theorem \ref{th:boxsplinesRB}, the Riesz condition number of the linear box-spline parametrization is $\sqrt{d+2}$, which grows with the dimension. However, this metric only reflects extreme cases. A good estimate of the average behavior of the parametrization is given by the mean of the function $\widehat g$ (defined in the proof of Theorem \ref{th:boxsplinesRB}) over $[0,2\pi]^d$, compute as
    \begin{align}
      \frac{1}{(2\pi)^d}\int_{\V \omega \in [0,2\pi]^d}\sum_{\M k \inZ^d}\dotprod{D}{D(\cdot - \M k)}\ee^{-\ii \V \omega^T \M k} \dint \V \omega&= \dotprod{D}{D} = \|D\|_{L_2}^2=2 \frac{\vol{(\mathrm{St}(D)}}{(d+2)(d+1)}=2 \min_{\V \omega \in [0,2\pi]^d}\widehat g(\V \omega).
      \end{align}
  We infer that $\widehat g$ rarely takes values close to its upper bound, especially in high dimensions, which was observed for affine functions on a simplex in Lemma \ref{lm:conditionnumberdistribution}. In short, although the condition number scales badly with the dimension, most linear combinations of box splines will behave closely to the lower bound and the dimension should not be significantly detrimental to the stability of the parametrization.

\section{Conclusion}
We have provided a precise measure of the stability of the parametrization of CPWL functions with hat basis functions on any triangulation. First, we have estimated the $\ell_2\rightarrow L_2$ condition number of the parametrization in full generality. We have found that it is mainly determined by the relative volume of the star of the vertices of the triangulation, namely, the relative size of the support of the hat functions. Then, we have proposed a method to compute the condition number for finite triangulations. It boils down to the computation of the condition number of a given matrix. When the vertices lie at the sites of a lattice, we parametrize the CPWL functions with linear box splines. We have provided a formula to relate these local basis functions to nonlocal ReLU-like functions. In this uniform setting, we have proved that the condition number only depends on the dimension $d$ and is $\sqrt{d+2}$. Although it increases with the dimension, we noticed that, from a stochastic point of view, dimension will only rarely affect the stability of the local representation.
\newpage
\appendix
\section{Condition Number of the Nonlocal Parametrization}
\label{ap:condition}
Consider the nonlocal parametrization of one-dimensional CPWL functions with control points $v_1<\cdots<v_{K}\inR$
\begin{equation}
  T\{\V \theta\}(x) =  \theta_1 + \theta_2 (x-v_1)+\sum_{k=2}^{K-1} \theta_{k+1}(x-v_k)_+.
\end{equation}
Given target values $y_1,\ldots,y_{K} \inR$, the parameters $\theta_1,\ldots,\theta_{K}\inR$ have to satisfy for $p=1,\ldots,K$ that
\begin{align}
  y_p = T\{\V \theta\}(v_p) = \theta_1 + \theta_2 (v_p-v_1)+\sum_{k=2}^{K-1} \theta_{k+1}(v_p-v_k)_+,
\end{align}
which, assuming a constant step size $h=(v_{k+1}-v_k)$, further simplifies in
$
  y_p = \theta_1 + h \sum_{k=1}^{p-1} \theta_{k+1}(p-k)
$.
This yields the matrix equation
\begin{equation}
  \V \theta = \frac{1}{h}\begin{bmatrix}
    1/h &0  & \cdots &  \cdots & 0\\ 
    1/h & 1 & 0 &\cdots&0 \\ 
     \vdots& 2 & \ddots & \ddots &\vdots\\ 
    \vdots & \vdots & \ddots  & \ddots&0\\ 
    1/h & (K-1) & \cdots &2 &1
    \end{bmatrix}^{-1}\M y = \M M^{-1}\M y.
\end{equation}
To give a lower bound to the condition number of the problem, we remark that $\M M \M e_2 = h \sum_{k=1}^{K-1}k \M e_{k+1} \text{ and } \M M \M e_K = h \M e_K$, where $\M e_k$ are the canonical vectors of $\R^d$. We infer that $\|\M M^{-1}(\sum_{k=1}^{K-1}k \M e_k)\|_2/\|\sum_{k=1}^{K-1}k \M e_k)\|_2=(K(K-1)(2K-1)/6)^{-1/2}/h$ and that $\frac{\|\M M^{-1}\M e_K\|_2}{\|\M e_K\|_2}= 1/h$. It implies that the $\ell_2$ condition number $r$ of the problem satisfies
\begin{equation}
  r=\max _{\M a,\M b\inR^d\backslash\{\V 0\}}\left\{{\frac {\left\|\M M^{-1}\M a\right\|}{\|\M a\|}}{\frac {\|\M b\|}{\left\|\M M^{-1}\M b\right\|}}\right\}\geq \sqrt{\frac{K(K-1)(2K-1)}{6}}.
\end{equation}
\section{The Generalized Hinging Hyperplane Generating Function of Linear Box Splines}
\label{ap:green}
The Fourier transform $\widehat{h}$ of the Heaviside function $h\colon x \mapsto\begin{cases}1, & x\geq 0\\0,\text{ otherwise},\end{cases}$ is given by
\begin{equation}
  \widehat{h}\colon \omega\mapsto\frac{1}{\ii\omega} + \pi \delta(\omega).
\end{equation}
From this we infer $\widehat{H}$, the Fourier transform of $H\colon \M x\mapsto \prod_{k=1}^d h(x_k)$, the separable version of $h$ in $d$ dimensions, 
\begin{equation}
  \widehat{H} \colon \V \omega \mapsto \prod_{k=1}^{d} \left(\frac{1}{\ii\omega_k} +\pi \delta(\omega_k)\right).
\end{equation}

We define a directional Heaviside wall function as $W\colon \M x \mapsto h(x_d)\prod_{k=1}^{d-1} \delta(x_k)$ and the matrix $\M A_d = \begin{bmatrix}
  1&-1\\
  &\ddots&\ddots\\
  & & 1&-1\\
  1&\cdots &\cdots&1
\end{bmatrix}$.
\begin{lemma}
  \label{lm:convequation}
  \begin{equation}
    \forall \M x \inR^d, (H*(W\circ \M A_d))(\M x) = \min(\M x)_+.
  \end{equation}
\end{lemma}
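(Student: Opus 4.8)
The plan is to recognize that, although $W\circ\M A_d$ is defined through a product of Dirac masses and a Heaviside factor, it collapses to a very simple object: a one-dimensional line measure carried by the positive diagonal ray $\{u\M 1\colon u\geq 0\}$, where $\M 1=(1,\ldots,1)$. Once this is established, the convolution with the orthant indicator $H$ becomes an elementary one-parameter integral whose value is exactly $\min(\M x)_+$. So I would proceed in two stages: first identify $W\circ\M A_d$ explicitly, then carry out the convolution.

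For the first stage, I would treat $W\circ\M A_d$ as the pullback of the distribution $W$ by the invertible linear map $\M A_d$, using the standard rule $\langle W\circ\M A_d,\phi\rangle=\frac{1}{|\det\M A_d|}\langle W,\phi\circ\M A_d^{-1}\rangle$. Since $\langle W,\psi\rangle=\int_0^\infty\psi(0,\ldots,0,t)\,\dint t$, the only geometric input needed is the preimage $\M A_d^{-1}(0,\ldots,0,t)$. The first $d-1$ rows of $\M A_d$ encode the consecutive differences $x_k-x_{k+1}$, so forcing them to vanish gives $x_1=\cdots=x_d=:s$; the last row encodes the sum, so $ds=t$ and hence $\M A_d^{-1}(0,\ldots,0,t)=\frac{t}{d}\M 1$. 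A short determinant computation (for instance, cumulatively adding columns reduces $\M A_d$ to a lower-triangular matrix with diagonal $(1,\ldots,1,d)$) gives $\det\M A_d=d$. Substituting $u=t/d$ then yields $\langle W\circ\M A_d,\phi\rangle=\frac{d}{|\det\M A_d|}\int_0^\infty\phi(u\M 1)\,\dint u=\int_0^\infty\phi(u\M 1)\,\dint u$, i.e. $W\circ\M A_d=\mu$, the ray measure described above. The crucial point is that the Jacobian factor $1/|\det\M A_d|=1/d$ exactly cancels the factor $d$ coming from the substitution $u=t/d$, leaving a unit-speed measure.

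For the second stage, since $H$ is bounded and $\mu$ a positive measure, I would evaluate the convolution pointwise as $(H*\mu)(\M x)=\int_0^\infty H(\M x-u\M 1)\,\dint u$. Writing $H(\M x-u\M 1)=\prod_{k=1}^d h(x_k-u)$, this product equals $1$ when $u\leq x_k$ for every $k$, i.e. when $0\leq u\leq\min_k x_k$, and vanishes otherwise. Hence the integral equals $\min_k x_k$ if $\min_k x_k\geq 0$ and $0$ if $\min_k x_k<0$, which is precisely $\min(\M x)_+$. I expect the main obstacle to be the first stage: the careful distributional bookkeeping of the pullback through the non-orthogonal map $\M A_d$, and in particular verifying that the normalization $\det\M A_d=d$ is what makes the Heaviside/Dirac product condense into the clean line measure; the convolution step itself is then routine.
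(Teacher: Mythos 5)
Your proof is correct and follows essentially the same route as the paper's: both hinge on the change of variables through $\M A_d$ with Jacobian $\det\M A_d=d$, the identity $\M A_d^{-1}\M e_d=d^{-1}\V 1$, and the reduction to the one-dimensional integral $\int_0^\infty\prod_k h(x_k-u)\,\dint u=\min(\M x)_+$. The only difference is organizational: you first package $W\circ\M A_d$ as a unit-speed ray measure before convolving, whereas the paper performs the substitution directly inside the convolution integral; the cancellation of the $1/d$ Jacobian against the factor $d$ from rescaling is the same in both.
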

\begin{proof}
  We have that
  \begin{align}
    (H*(W\circ \M A_d))(\M x) = \int_{\R^d}H(\M x - \M t)W(\M A_d \M t)\dint \M t.
  \end{align}
 In the sequel, we take advantage of two properties.
 \begin{itemize}
   \item We prove that $\det (\M A_d) = d$ by induction using the Laplace/cofactor expansion along the last column.
   \item We observe that $\M A_d  \V 1 = d \M e_d$, which in turn implies that $\M A_d^{-1}  \M e_d = d^{-1}  \V 1$.
 \end{itemize}
 We now use the change of variable $\M y= \M A_d\M t$. It yields that
  \begin{align}
    (H*(W\circ \M A_d))(\M x) &= (1/d)\int_{\R^d} H(\M x - \M A_d^{-1}\M y) h(y_{d})\prod_{k=1}^{d-1}\delta(y_k)\dint \M y \nonumber\\
    &= (1/d)\int_{\R^d} H(\M x - y_d \M A_d^{-1}\M e_d) h(y_{d})\prod_{k=1}^{d-1}\delta(y_k)\dint \M y\nonumber\\
    &= (1/d)\int_{\R^d} H(\M x - \frac{y_d}{d} \V 1) h(y_{d})\prod_{k=1}^{d-1}\delta(y_k)\dint \M y\nonumber\\
    &=(1/d)\int_{\R^d} h(y_{d})\left (\prod_{k=1}^{d} h(x_k - y_d/d)\right ) \left (\prod_{k=1}^{d-1}\delta(y_k)\right )\dint \M y\nonumber\\
    &=(1/d)\int_{\R} h(y_{d})\left (\prod_{k=1}^{d} h(x_k - y_d/d)\right )\dint y_d.
  \end{align}
  The quantity $\prod_{k=1}^{d} h(x_k - y_d/d) h(y_{d})$ is nonzero when $y_d>0$ and $y_d<dx_k$ for $k=1,\ldots,d$, which is equivalent to $y_d>0$ and $y_d<d\min(x_k)$. We can now conclude that $(H*(L\circ \M A_d))(\M x) = \min(\M x)_+$.
\end{proof}
\begin{lemma}
  \label{lm:fourierminplus}
  \begin{equation}
    \min(\M x)_+\stackrel{\mathcal{F}}{\mapsto}\prod_{k=1}^{d+1} \left(\frac{1}{\ii\omega_k} +\pi \delta(\omega_k)\right),
  \end{equation}
  where $\omega_{d+1}=\sum_{k=1}^d \omega_k$.
\end{lemma}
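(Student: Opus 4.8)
The plan is to read off the transform of $\min(\M x)_+$ directly from the convolution identity of Lemma \ref{lm:convequation} by invoking the convolution theorem $\mathcal{F}\{f*g\}=\widehat f\,\widehat g$. Since Lemma \ref{lm:convequation} exhibits $\min(\M x)_+$ as the genuine function $(H*(W\circ \M A_d))(\M x)$, its Fourier transform factors as $\widehat H\cdot\widehat{W\circ \M A_d}$. The first factor is already recorded above, $\widehat H(\V \omega)=\prod_{k=1}^{d}(\frac{1}{\ii\omega_k}+\pi\delta(\omega_k))$, so the whole task reduces to computing $\widehat{W\circ \M A_d}$.

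First I would compute $\widehat W$. Because $W(\M x)=h(x_d)\prod_{k=1}^{d-1}\delta(x_k)$ is a tensor product of the Heaviside function in the last coordinate and Dirac masses in the others, separability together with $\widehat h(\omega)=\frac{1}{\ii\omega}+\pi\delta(\omega)$ and $\widehat\delta=1$ yields $\widehat W(\V \eta)=\frac{1}{\ii\eta_d}+\pi\delta(\eta_d)$. Then I would apply the Fourier stretch theorem to the composition with $\M A_d$, namely $\widehat{W\circ \M A_d}(\V \omega)=\frac{1}{|\det \M A_d|}\,\widehat W(\M A_d^{-T}\V \omega)$. Here the two elementary facts established in the proof of Lemma \ref{lm:convequation} are exactly what is needed: $\det \M A_d=d$ fixes the prefactor $\frac1d$, while $\M A_d^{-1}\M e_d=d^{-1}\V 1$ gives the only component of $\M A_d^{-T}\V \omega$ that enters $\widehat W$, namely $(\M A_d^{-T}\V \omega)_d=(\M A_d^{-1}\M e_d)^{T}\V \omega=\frac1d\sum_{k=1}^{d}\omega_k=\frac{\omega_{d+1}}{d}$.

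Substituting and using the Dirac scaling rule $\delta(t/d)=d\,\delta(t)$ (valid since $d>0$), I obtain $\widehat W(\M A_d^{-T}\V \omega)=\frac{d}{\ii\omega_{d+1}}+\pi d\,\delta(\omega_{d+1})$, so the prefactor $\frac1d$ cancels and $\widehat{W\circ \M A_d}(\V \omega)=\frac{1}{\ii\omega_{d+1}}+\pi\delta(\omega_{d+1})$; multiplying by $\widehat H$ then produces the announced product over $k=1,\ldots,d+1$. The main obstacle is the distributional bookkeeping: both $\widehat H$ and $\widehat{W\circ \M A_d}$ carry Dirac factors, so their pointwise product is a priori delicate. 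This step is legitimate precisely because the left-hand convolution in Lemma \ref{lm:convequation} is an honest function of slow growth, which makes the convolution theorem applicable in the tempered-distribution setting; moreover no ill-defined product of coincident deltas ever arises, since the singular factors live on the transversal hyperplanes $\{\omega_k=0\}_{k=1}^{d}$ and $\{\omega_{d+1}=0\}$, so that the only inputs really required are the stretch theorem and the scalar delta-scaling identity.
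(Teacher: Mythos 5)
Your proposal is correct and follows essentially the same route as the paper: apply the convolution theorem to the identity of Lemma \ref{lm:convequation}, compute $\widehat{W}$ by separability, use the stretch theorem with $\det \M A_d = d$ and $\M A_d^{-1}\M e_d = d^{-1}\V 1$ to identify the relevant component as $\omega_{d+1}/d$, and cancel the $1/d$ prefactor via Dirac scaling. Your added remark on why the distributional product is legitimate is a welcome clarification but does not change the argument.
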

\begin{proof}
  From Lemma \ref{lm:convequation}, we have that $(H*(W\circ \M A_d))(\M x) = \min(\M x)_+$. The function $W$ is separable and its Fourier transform reads $\widehat{W}(\V \omega) = \frac{1}{\ii \omega_d}+\pi \delta(\omega_d)$. In addition, the general stretch theorem implies that
  \begin{equation}
    (W\circ \M A_d) \stackrel{\mathcal{F}}{\mapsto}\frac{1}{d}\widehat{W}(\M A_d^{-T}\V \omega) = \frac{1}{d}\left(\frac{1}{\ii \M e_d^T \M A_d^{-T}\V \omega}+\pi \delta(\M e_d^T \M A_d^{-T}\V \omega)\right).
  \end{equation}
  Now, we use that $\M A_d^{-1} \M e_d = d^{-1}\M e_d$ and the effect of a dilation on the Dirac distribution to conclude that
  \begin{equation}
    (W\circ \M A_d) \stackrel{\mathcal{F}}{\mapsto}\frac{1}{d}\widehat{W}(\M A_d^{-T}\V \omega) = \left(\frac{1}{\ii \omega_{d+1}}+\pi \delta( \omega_{d+1})\right).
  \end{equation}
 We reach the conclusion by using the Fourier transform of $H$ and by transforming the convolution into a product in the Fourier domain.
\end{proof}
\newpage
\bibliography{All3}
\bibliographystyle{elsarticle-num}
\end{document}